\newcommand{\field}[1]{\mathbb{#1}}
\newcommand{\A}{\field{A}}
\newcommand{\C}{\field{C}}
\newcommand{\D}{\field{D}}
\newcommand{\HH}{\field{H}}
\newcommand{\N}{\field{N}}
\newcommand{\PP}{\field{P}}
\newcommand{\R}{\field{R}}
\newcommand{\Sp}{\field{S}}
\newcommand{\Z}{\field{Z}}
\theoremstyle{plain}
\newtheorem{theorem}{Theorem}[section]
\newtheorem{proposition}[theorem]{Proposition}
\newtheorem{lemma}[theorem]{Lemma}
\newtheorem{corollary}[theorem]{Corollary}
\newtheorem{definition}[theorem]{Definition}
\newtheorem{remark}[theorem]{Remark}
\newtheorem{example}[theorem]{Example}
\theoremstyle{definition}
\theoremstyle{remark}
\begin{document}

% Redefine "plain" pagestyle
\makeatletter	   % `@' is now a normal "letter' for LaTeX
\makeatother     % `@' is restored as a "non-letter" character

\title{The Polar Group of a Real Form of an Affine or Projective $\C$-Variety}
\author{Gene Freudenburg}
\date{\today} 

\maketitle

\pagestyle{plain}

\begin{abstract} A general problem is to classify the real forms of a complex variety up to isomorphism. 
This paper introduces the polar group of a real form $X$ of a complex variety $Y$ as a tool to distinguish such real forms. This group is an invariant of $X$ which encodes information about the residual divisors in the coordinate ring of $Y$ over the coordinate ring of $X$. We calculate polar groups for various curves, including the real line $\R^1$ and the algebraic 1-sphere $\Sp^1$.
\end{abstract}

\section{Introduction}

A real form of an integral $\C$-domain $B$ is any integral $\R$-domain $A$ such that $\R$ is algebraically closed in $A$ and $B=\C\otimes_{\R}A=A[i]$. 
A general problem is to classify the real forms of $B$ up to isomorphism. 
One birational invariant associated to a real form is the abelian group $L^*/K^*$, where $K$ is the field of fractions of $A$ with unit group $K^*$, 
and $L=K[i]$ is the field of fractions of $B$ with unit group $L^*$. A key feature of this group is that $[c]^{-1}=[\bar{c}]$, where $[c]$ denotes the class of $c\in L^*$, and $\bar{c}$ is the conjugate of $c$. However, this invariant will not distinguish between non-isomorphic real forms which are birationally isomorphic. 
In this article, we study the quotient group $L^*/B^*K^*$, where $B^*$ is the unit group of $B$. This group is an invariant of the ring $A$ which encodes information about the residual divisors in $B$ over $A$.
We call it the {\bf polar group} of $A$, denoted $\Pi (A)$. 

Affine rings are of particular interest. If $A$ is finitely generated over $\R$, then ${\rm Spec}(A)$ is a real form of ${\rm Spec}(B)$ and we define $\Pi ({\rm Spec}(A))=\Pi (A)$. 
If $A$ is $\Z$-graded, then the grading extends to $B$ and ${\rm Proj}(A)$ is a real form of ${\rm Proj}(B)$. We define $\Pi ({\rm Proj}(A))=L_0^*/B_0^*K_0^*$, where $L_0\subset L$ and $K_0\subset K$ are the subfields of elements of degree zero and $B_0=B\cap L_0$. 

An affine variety (over any ground field) is factorial if its coordinate ring is a unique factorization domain. Suppose that $X$ is an affine $\R$-variety and $Y=\C\otimes X$. 
It can happen that $X$ is factorial and $Y$ is not factorial; that $X$ is not factorial and $Y$ is factorial; or that both $X$ and $Y$ are factorial. 
The algebraic spheres $\Sp^n$ serve to illustrate these properties. $\Sp^1$ is a non-factorial curve, while its complexification $\C^*$ is factorial. $\Sp^2$ is a factorial surface, 
while the complex surface $\C\otimes\Sp^2$ is not factorial. $\Sp^3$ and its complexification $SL_2(\C)$ are both factorial. 

The strongest results about polar groups are obtained when $X$ or $Y$ is factorial. If $X$ is factorial, then $\Pi (X)$ is torsion free ({\it Thm.\,\ref{A=UFD3}}). If $Y$ is factorial, 
{\it Thm.\,\ref{free}} gives a criterion for $\Pi (X)$ to be free abelian, and gives a basis when this criterion is met. 
In particular, $\Pi (X)$ is free abelian if both $X$ and $Y$ are factorial ({\it Cor.\,\ref{A=UFD2}}).

{\it Section 6} computes polar groups for various curves, including $\Sp^1$. As an application, polar groups are used in \cite{Freudenburg.ppt18b} to prove that there is only one polynomial embedding of $\Sp^1$ in $\R^2$, up to the action of a plane automorphism. 

\subsection{Notation} 
\begin{enumerate}
\item The group of units of the ring $R$ is denoted $R^*$. 
\item The multiplicative monoid $R\setminus\{ 0\}$ of the ring $R$ is denoted $R'$. 
\item The polynomial ring in $n$ variables over the ring $R$ is denoted $R^{[n]}$.
\item Let $k=\R$ or $\C$. The automorphism group of $B$ as a $k$-algebra is ${\rm Aut}_k(B)$, 
affine $n$-space over $k$ is $\A^n_k$, and projective $n$-space over $k$ is $\PP^n_k$.
\item The real $n$-sphere $\Sp^n$ is the algebraic $\R$-variety with coordinate ring:
\[
\R[x_0,\hdots ,x_n]/(x_0^2+\cdots +x_n^2-1)
\]
\item Given $z=a+bi\in\C$ for $a,b\in\R$, $\| z\|=\sqrt{a^2+b^2}$.
\item The closed unit disk is $\D =\{ z\in\C \, |\, \| z\|\le 1\}$.
\item The upper half plane of the complex plane is $\HH =\{ z\in\C \, |\, {\rm Im}(z)>0\}$.
\end{enumerate}

%%%%%%%%%%%%%%%%%%%%%%%%%%%%%%%%%%%%%%%%%%%%%%%%%%%%%%%%%%%%%%%%%%%%%%%%%%%%%%

\section{Conjugations}\label{conj}

\subsection{Automorphisms} Consider any integral domain $B$ containing $\C$. 
Assume that ${\rm Gal}(\C/\R )\cong\Z_2$ acts on $B$ by $\R$-algebra automorphisms and that the inclusion $\C\to B$ equivariant. 
This is equivalent to the condition that $B$ admits a real form.
${\rm Gal}(\C/\R )$ acts on ${\rm Aut}_{\C}(B)$ by group-theoretic conjugation, and if $A=B^{{\rm Gal}(\C /\R )}$, then:
\[
{\rm Aut}_{\R}(B)={\rm Aut}_{\C}(B)\rtimes {\rm Gal}(\C/\R) \quad {\rm and}\quad {\rm Aut}_{\R}(A)={\rm Aut}_{\C}(B)^{{\rm Gal}(\C /\R )}
\]
Let $\langle\mu\rangle$ be the image of ${\rm Gal}(\C /\R )$ in ${\rm Aut}_{\R}(B)$. 

\begin{definition} A {\bf conjugation} of $B$ is an involution in the coset ${\rm Aut}_{\C}(B)\mu$, i.e., an $\R$-algebra involution of $B$ which restricts to complex conjugation on $\C$. \end{definition}
Let ${\rm Conj}(B)$ be the set of all conjugations of $B$.
Given $\alpha\in {\rm Aut}_{\C}(B)$, define $\bar{\alpha}=\mu\alpha\mu$, noting that 
$\overline{\alpha\beta}=\bar{\alpha}\bar{\beta}$. Then:
\[
\alpha\mu\in {\rm Conj}(B) \quad\Leftrightarrow\quad  \alpha\bar{\alpha}=1
\]
Note that the product of two elements of ${\rm Conj}(B)$ is an element of ${\rm Aut}_{\C}(B)$, and if $\alpha\in {\rm Aut}_{\C}(B)$ is an involution which commutes with $\sigma\in {\rm Conj}(B)$, then $\alpha\sigma\in{\rm Conj}(B)$. 

Given $\sigma\in {\rm Conj}(B)$, let $A=B^{\sigma}$, the fixed ring of $\sigma$. Then $\R\subset A$ and $\R$ is algebraically closed in $A$ (since $A\cap\C=\R)$. 
In addition, $B$ is an $A$-module 
and $\sigma$ is an $A$-module homomorphism. Given $f\in B$, note that $f+\sigma (f), i(f-\sigma (f))\in A$. 
Define:
\[
\textstyle f_1=\frac{1}{2}(f+\sigma (f)) \quad {\rm and}\quad f_2=-\frac{1}{2}i(f-\sigma (f))
\]
Then $f_1,f_2\in A$ and $f=f_1+if_2$. 
Consequently, $B=A\oplus iA$ and $A$ is a real form of $B$.

Let $G={\rm Aut}_{\C}(B)$ and define the mapping ${\rm adj}:G\to G$ by ${\rm adj}(g)=[g,\mu]$. Then ${\rm adj}(g)\overline{{\rm adj}(g)}=1$ for every $g\in G$. So there is an induced mapping $G\to {\rm Conj}(B)$ given by $g\to {\rm adj}(g)\mu = g\mu g^{-1}$, thus mapping $G$ onto the conjugacy class of $\mu$ in ${\rm Conj}(B)$. 

\subsection{Affine and Projective Varieties}
\begin{proposition}\label{finite} If $B=\C [x_1,...,x_n]$ and $\sigma\in {\rm Conj}(B)$, then:
\[
B^{\sigma}=\R [x_1+\sigma (x_1),\hdots ,x_n+\sigma (x_n) , i(x_1-\sigma (x_1)),\hdots , i(x_n-\sigma (x_n))]
\]
\end{proposition}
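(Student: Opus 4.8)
Write $A=B^\sigma$ and let $A_0$ denote the $\R$-subalgebra of $B$ on the right-hand side of the asserted equality, i.e.\ the one generated by the $2n$ elements $u_j=x_j+\sigma(x_j)$ and $v_j=i(x_j-\sigma(x_j))$ for $1\le j\le n$. The plan is to establish the two inclusions $A_0\subseteq A$ and $A\subseteq A_0$ separately. The first is a one-line verification; the real content is the second, and it rests on the observation that $A_0$ already generates $B$ over $i$.

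First I would check $A_0\subseteq A$. Since $\sigma$ is an involution, $\sigma(u_j)=\sigma(x_j)+x_j=u_j$; and since $\sigma$ restricts to complex conjugation on $\C$, so that $\sigma(i)=-i$, we get $\sigma(v_j)=(-i)(\sigma(x_j)-x_j)=v_j$. As $\sigma$ is an $\R$-algebra homomorphism fixing each generator of $A_0$, it fixes all of $A_0$, whence $A_0\subseteq B^\sigma=A$.

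Next, the key step: $B=A_0+iA_0$. Solving the defining relations for $x_j$ gives $x_j=\frac12 u_j-\frac{i}{2}v_j$, which lies in $A_0+iA_0$. Now $A_0+iA_0$ is closed under addition and under multiplication (being the image of $A_0[i]$ in $B$), and it contains $\R$ and $i$, hence all of $\C$; so it is a $\C$-subalgebra of $B$ containing $x_1,\dots,x_n$, and therefore it is all of $B=\C[x_1,\dots,x_n]$.

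Finally I would deduce $A\subseteq A_0$. Given $f\in A$, write $f=a+ib$ with $a,b\in A_0$, using the previous step. Applying $\sigma$ and using $A_0\subseteq A$ together with $\sigma(i)=-i$ gives $\sigma(f)=a-ib$; since $f\in B^\sigma$ this yields $a+ib=a-ib$, and because $B$ is an integral domain of characteristic zero, $b=0$ and $f=a\in A_0$. Hence $A=A_0$. I do not expect a genuine obstacle here: the only subtle point is that the representation $f=a+ib$ need not be asserted to be unique, but uniqueness is never used (and in any case follows from $A_0\cap iA_0\subseteq A\cap iA=0$, via the splitting $B=A\oplus iA$ recorded just before the proposition). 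I would also note that the argument uses nothing about $B$ beyond its being a domain finitely generated over $\C$ by $x_1,\dots,x_n$, so the same statement holds for an arbitrary affine $\C$-domain.
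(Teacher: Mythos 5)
Your proof is correct, and it takes a genuinely different route from the one in the paper. The paper also begins by noting that the subring $R$ generated by the $2n$ elements sits inside $B^{\sigma}$, but it proves the reverse inclusion by induction on the degree of monomials: using the identities $(x_j+\sigma(x_j))(m+\sigma(m))-i(x_j-\sigma(x_j))\cdot i(m-\sigma(m))=2(x_jm+\sigma(x_jm))$ and $i(x_j-\sigma(x_j))(m+\sigma(m))+i(m-\sigma(m))(x_j+\sigma(x_j))=2i(x_jm-\sigma(x_jm))$, it shows that $m+\sigma(m)$ and $i(m-\sigma(m))$ lie in $R$ for every monomial $m$, and then invokes the description $B^{\sigma}=\R[\,b+\sigma(b),\,i(b-\sigma(b))\mid b\in B\,]$ coming from the splitting $B=A\oplus iA$. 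You avoid the induction entirely: from $x_j=\tfrac12 u_j-\tfrac{i}{2}v_j$ you observe that $A_0+iA_0$ is a $\C$-subalgebra of $B$ containing the generators, hence all of $B$, and then you recover $A\subseteq A_0$ by applying $\sigma$ to a decomposition $f=a+ib$ and cancelling $2i$ (a unit, so even the domain hypothesis is not really needed). Your argument is shorter and makes transparent that the statement holds for any affine $\C$-domain with a choice of generators, which the paper's inductive argument also establishes but less directly; the paper's proof, on the other hand, is more explicit about how the trace-type elements $m+\sigma(m)$, $i(m-\sigma(m))$ are expressed in terms of the $2n$ given generators, which can be useful if one wants effective formulas. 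No gaps to report.
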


\begin{proof} Let $R\subset B^{\sigma}$ denote the subring:
\[
R=\R [x_1+\sigma (x_1),\hdots ,x_n+\sigma (x_n) , i(x_1-\sigma (x_1)),\hdots , i(x_n-\sigma (x_n))]
\]
We have $B^{\sigma}=\R [b+\sigma(b), i(b-\sigma (b))\,\vert\, b\in B]$. Given $N\in\N$, define:
\[
M_N=\{ x_1^{e_1}\cdots x_n^{e_n}\,\vert\, e_j\in\N, e_1+\cdots +e_n\le N\}
\]
Then $m+\sigma (m), i(m-\sigma (m))\in R$ for $m\in M_1$. Assume by induction that, for some $N\ge 1$, $m+\sigma (m), i(m-\sigma (m))\in R$ whenever $m\in M_N$. 
Given $m\in M_N$ and $j$, $1\le j\le n$, we have
\[
(x_j+\sigma (x_j))(m+\sigma (m))-(i)(x_j-\sigma (x_j))(i)(m-\sigma (m)) =2(x_jm+\sigma (x_jm))
\]
and:
\[
i(x_j-\sigma (x_j))(m+\sigma (m))+i(m-\sigma (m))(x_j+\sigma (x_j)) =2i(x_jm-\sigma (x_jm))
\]
It follows that $m'+\sigma (m'), i(m'-\sigma (m'))\in R$ whenever $m'\in M_{N+1}$. Therefore, $f+\sigma (f), i(f-\sigma (f))\in R$ for any monomial $f$ in $x_1,\hdots , x_n$. From this, we conclude that $R=B^{\sigma}$.
\end{proof}

Note that {\it Prop.\,\ref{finite}} shows that, if $B$ is affine over $\C$ and $\sigma\in {\rm Conj}(B)$, then $B^{\sigma}$ is affine over $\R$. 
More precisely, if $B$ is generated by $n$ elements over $\C$, then $A=B^{\sigma}$ is generated by $2n$ elements over $\R$. Quite often, we want to know if $A$ can be generated by $n$ elements. 

For affine $B$, let $Y={\rm Spec}(B)$ and $X={\rm Spec}(B^{\sigma})$. The induced map $\sigma^* :Y\to Y$ is a {\bf conjugation} of $Y$, and the fixed points of $\sigma^*$ (possibly empty) correspond to the $\R$-rational points of $X$.  

If $A$ is a graded ring, then ${\rm Proj}(A)$ is a real form of ${\rm Proj}(B)$. More generally, conjugations and real forms can be defined for certain types of schemes over $\C$;
see \cite{Hartshorne.77}, Exercise II.4.7. 
Two important open questions are (1) whether the number isomorphism classes of real forms of a complex rational surface is always finite (see \cite{Benzerga.16}), and (2) whether an affine $\C$-variety can admit more than one compact real form. 

\subsection{Examples}

The following three examples are closely related. 

\begin{example} {\rm The field of rational functions $L=\C (t)$ has two real forms, namely, $K_1=\R (t)$ and $K_2=\R (x,y)$, where $x^2+y^2+1=0$. To show $K_1\not\cong K_2$, it suffices to observe that $K_1$ is a formally real field, but $K_2$ is not formally real; see \cite{Freudenburg.17}, Lemma 10.10. Let $\mu$ be standard conjugation on $L$, i.e., $\mu (z)=\bar{z}$ for $z\in\C$ and $\mu (t)=t$. 
Let $\beta$ be the $\C$-automorphism of $L$ defined by $\beta (t)=-t^{-1}$, and define a second conjugation of $L$ by $\sigma =\beta\mu$. 
Then $K_1=L^{\mu}=\R (t)$ and $K_2=L^{\sigma}=\R (t-t^{-1}, i(t+t^{-1}))$.}
\end{example}

\begin{example}\label{proj-line}  {\rm $\PP^n_{\R}$ is a real form of $\PP^n_{\C}$ for all $n\ge 0$. For even $n$, this is the only real form of projective space, but for odd $n$ there is a second. Consider the case $n=1$; see \cite{Benzerga.16} for the general case.

Let $X_{\R}$ be the conic in $\PP^2_{\R}$ defined by $x_0^2+x_1^2+x_2^2=0$, and let $X_{\C}$ be the conic in $\PP^2_{\C}$ defined by the same form. Then $X_{\R}$ is a real form of $X_{\C}$. If $\phi :\PP^2_{\C}\to\PP^2_{\C}$ is standard conjugation, then $\phi$ restricts to $X_{\C}$, so this restriction is a conjugation of $X_{\C}$, and ${\rm Fix}(\phi)\cap X_{\C}=\emptyset$. Since $X_{\C}\cong\PP^1_{\C}$, we see that $X_{\R}$ is a real form of $\PP^1_{\C}$ and $X_{\R}\not\cong\PP^1_{\R}$. 
The reader can check that, if $\psi (y_0;y_1)=(\bar{y}_0;\bar{y}_1)$ is standard conjugation on $\PP^1_{\C}$, then the non-standard conjugation is given by 
$\sigma=\alpha\psi$, where $\alpha$ is the $\C$-automorphism $\alpha (y_0;y_1)=(-y_1;y_0)$, i.e., $\sigma (y_0;y_1)=(-\bar{y}_1;\bar{y}_0)$. }
\end{example} 

\begin{example}\label{Laurent} {\rm Let $B=\C [t,t^{-1}]$ be the ring of Laurent polynomials. Then ${\rm Spec}(B)=\C^*$, the punctured complex line, and ${\rm Aut}_{\C}(B)=B^*\rtimes\Z_2$, where $B^*$  acts by multiplication and $\Z_2$ interchanges $t$ and $t^{-1}$. 
There are three inequivalent real forms of $\C^*$.
\begin{enumerate}
\item Define $\phi\in {\rm Conj}(B)$ by $\phi (z)=\bar{z}$ for $z\in\C$ and $\phi (t)=t$. Then:
\[
B^{\phi}=\R [t,t^{-1}]=\R [X,Y] \quad {\rm where}\quad  XY=1
\]
Geometrically, $\phi^* :\C^*\to \C^*$ is given by $z\to\bar{z}$, with fixed points ${\rm Fix}(\phi^*)=\R^*$.
\smallskip
\item Define $\alpha\in {\rm Aut}_{\C}(B)$ by $\alpha (t)=t^{-1}$ and $\psi\in {\rm Conj}(B)$ by $\psi =\alpha\phi$. Then:
\[
B^{\psi}=\R [t+t^{-1}, i(t-t^{-1})]=\R [X,Y] \quad {\rm where}\quad  X^2+Y^2=1
\]
Geometrically, $\psi^*$ is given by $z\to\bar{z}^{-1}$, with fixed points ${\rm Fix}(\psi^*)=\Sp^1$.
\smallskip
\item Define $\beta\in {\rm Aut}_{\C}(B)$ by $\beta (t)=-t^{-1}$ and $\sigma\in {\rm Conj}(B)$ by $\sigma =\beta\phi$. Then:
\[
B^{\sigma}=\R [t-t^{-1}, i(t+t^{-1})]=\R [X,Y] \quad {\rm where}\quad  X^2+Y^2=-1
\]
Geometrically, $\sigma^*$ is given by $z\to-\bar{z}^{-1}$, with fixed points ${\rm Fix}(\sigma^*)=\emptyset$. 
\end{enumerate}
}
\end{example}

%%%%%%%%%%%%%%%%%%%%%%%%%%%%%%%%%%%%%%%%%%%%%%%%%%%%%%%%%%%%%%%%%%%%%%%%%%%%%%

\section{Polar Factorization}
Hereafter, we assume that $A$ is a noetherian integral $\R$-domain in which $\R$ is algebraically closed, and that $B=\C\otimes_{\R}A$ is an integral domain. 
In this case, $B$ is also noetherian. Note that, if $B$ is normal, then $A$ is normal. In addition, recall:
\begin{center}
{\it If $B$ is a UFD and $B^*=\C^*$, then $A$ is a UFD.}
\end{center}

$B$ is an integral extension of $A$ of degree 2 given by $B=A[i]$. In this case, $B$ is an $A$-module of rank 2 given by $B=A\oplus iA$, and the $A$-module map sending $i$ to $-i$ is a conjugation.
If $K={\rm frac}(A)$ and $L={\rm frac}(B)$, then $L=K[i]$ and $K\cap B=A$.

Given $f,g\in B$, suppose that $f=f_1+if_2$ for $f_1,f_2\in A$. 
The {\bf conjugate} of $f$ is $\bar{f}=f_1-if_2$. For any set $S\subset B$, $\bar{S}=\{ \bar{s}\, |\, s\in S\}$.
The matrix form of $f$ is:
\[
N_f=\begin{pmatrix} f_1 & -f_2 \cr f_2 & f_1 \end{pmatrix}
\]
The minimal algebraic relation of $f$ over $A$ is given by the minimal polynomial of $N_f$. Note that $N_{\bar{f}}=N_f^T$.

\begin{definition} $f\in B'$ has {\bf no real divisor} if $f\in r\cdot B$ and $r\in A'$ implies $r\in A^*$.\end{definition}
Let $\Delta (B)$ be the set of elements of $B'$ with no real divisor. 

\begin{definition} A {\bf polar factorization} of $f\in B'$ is a factorization of the form $f=\alpha h$, $\alpha\in A'$, $h\in\Delta (B)$.\end{definition}

The group $\Gamma =B^*\rtimes\Z_2$ acts on $B'$, where $B^*$ acts by multiplication, and $\Z_2$ acts by $f\to\bar{f}$. Observe the following properties.
\begin{enumerate}
\item $B^*\subset\Delta (B)$ and $A\cap\Delta (B)=A^*$. 
\item  Given $f,g\in B'$, if $fg\in \Delta (B)$, then $f\in \Delta (B)$ and $g\in \Delta (B)$. 
\item The action of $\Gamma$ on $B'$ restricts to $\Delta (B)_1:=\{ f\in\Delta (B) \, |\, f \,\, {\rm is\,\, irreducible} \}\subset \Delta (B)$. 
\item $B'$ is generated as a monoid by $A'$, $B^*$ and $\Delta (B)_1$. 
\end{enumerate}

\begin{lemma}\label{ACCP1} Every element of $B'$ admits a polar factorization. 
\end{lemma}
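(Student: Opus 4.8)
The plan is to argue by contradiction, applying the noetherian hypothesis not to the ``real'' factors that get stripped off, but to the cofactors that remain. Let $S\subseteq B'$ be the set of elements admitting no polar factorization, and suppose $S\ne\emptyset$; fix $f\in S$. Whenever $f\in\Delta(B)$, the trivial factorization $f=1\cdot f$ (with $1\in A'$) is a polar factorization, so we must have $f\notin\Delta(B)$. By definition of $\Delta(B)$ this means $f$ has a proper real divisor: $f=rg$ with $r\in A'\setminus A^*$ and $g\in B$, and $g\ne 0$ because $f\ne 0$.

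The first key step is the observation that $g$ again lies in $S$. Indeed, if $g=\alpha h$ were a polar factorization, with $\alpha\in A'$ and $h\in\Delta(B)$, then $f=(r\alpha)h$ would be a polar factorization of $f$ since $r\alpha\in A'$, contradicting $f\in S$. Hence the assignment $f\mapsto g$ maps $S$ into itself and never terminates, producing an infinite sequence $f=f_1,f_2,f_3,\dots$ in $S$ with $f_n=r_nf_{n+1}$ and each $r_n\in A'\setminus A^*$.

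The second key step is to extract from this a strictly ascending chain of ideals $(f_1)\subsetneq(f_2)\subsetneq\cdots$ in $B$. Clearly $(f_n)=(r_nf_{n+1})\subseteq(f_{n+1})$. Equality would force $r_n\in B^*$; but $r_n\in A$, and since $A=K\cap B$ for $K={\rm frac}(A)$ one has $A\cap B^*=A^*$, so $r_n\in A^*$, contrary to the choice of $r_n$. As $B$ is noetherian, no such infinite chain can exist, so $S=\emptyset$, which is the assertion.

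The routine points are merely checking that the objects produced are nonzero and remain in the relevant sets; the only substantive input is the identity $A\cap B^*=A^*$ (immediate from $K\cap B=A$). Its role is precisely to reverse the ``obvious'' but useless descending chain $(r_1)\supseteq(r_1r_2)\supseteq\cdots$ into an ascending one, and this reversal — hence the decision to invoke the noetherian hypothesis in $B$ rather than in $A$ — is the one place where the argument requires a genuine idea rather than bookkeeping.
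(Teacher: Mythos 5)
Your argument is correct and is essentially the paper's: both strip real divisors to form the ascending chain of principal ideals generated by the cofactors in $B$, use $B^*\cap A=A^*$ to see the chain is strictly ascending as long as the stripped factor is a nonunit of $A$, and invoke noetherianness of $B$ to stop the process (you phrase it as a contradiction, the paper as stabilization of the chain). No gap.
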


\begin{proof} Given $g_0\in B'$, suppose that $g_0=r_1g_1$ for $r_1\in A'$ and $g_1\in B'$. Inductively, write 
$g_{n+1}=r_ng_n$ for $r_n\in A'$ and $g_n\in B'$. Then 
\[
g_0B\subseteq g_1B\subseteq g_2B\subseteq \cdots
\]
is an ascending chain of principal ideals. Since $B$ is noetherian, there exists $n\ge 0$ such that $g_{n+1}B=g_nB$. We thus have:
\[
r_{n+1}g_{n+1}=g_n \implies r_{n+1}g_{n+1}B=g_nB=g_{n+1}B \implies r_{n+1}\in B^*\cap A=A^*
\]
Therefore, $g_{n+1}\in\Delta (B)$, and if $r=r_1\cdots r_{n+1}$, then $g_0=rg_{n+1}$ is a polar factorization. 
\end{proof}

In an integral domain $R$, elements $f,g\in R$ are {\bf relatively prime} if $fR\cap gR=fgR$. When $R$ is a UFD, this coincides with the usual notion of relative primeness. 

\begin{lemma}\label{conj-gcd} Given $f\in B'$, if $f$ and $\bar{f}$ are relatively prime, then $f\in\Delta (B)$. 
\end{lemma}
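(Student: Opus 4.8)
The plan is to unwind the definition of $\Delta(B)$ directly. Suppose $f=rg$ with $r\in A'$ and $g\in B$; I must show $r\in A^*$. Since $r\in A$ we have $\bar{r}=r$, so conjugating the equation gives $\bar{f}=r\bar{g}$. Thus $r$ divides both $f$ and $\bar{f}$ in $B$, and $g\neq 0$ because $f\neq 0$.

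The key observation is to produce an explicit element witnessing membership in $fB\cap\bar{f}B$. Set $c=f\bar{g}$. Using $f=rg$ gives $c=rg\bar{g}$, while using $\bar{f}=r\bar{g}$ gives $c=\bar{f}g$; in particular $c=f\bar{g}=\bar{f}g\in fB\cap\bar{f}B$. By hypothesis $f$ and $\bar{f}$ are relatively prime, so $fB\cap\bar{f}B=f\bar{f}B$, and hence $c=f\bar{f}d$ for some $d\in B$.

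Now compare the two expressions for $c$: $f\bar{g}=f\bar{f}d$. Since $f\neq 0$ and $B$ is a domain, cancel $f$ to obtain $\bar{g}=\bar{f}d$. Substituting $\bar{f}=r\bar{g}$ yields $\bar{g}=r\bar{g}d$, i.e. $\bar{g}(1-rd)=0$; as $\bar{g}\neq 0$, we get $rd=1$, so $r\in B^*$. Finally $r\in A\cap B^*=A^*$ — an element of $A$ invertible in $B$ has its inverse in $K\cap B=A$ — which is exactly the claim, so $f\in\Delta(B)$.

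I do not expect a genuine obstacle here; the only nontrivial move is spotting the auxiliary element $c=f\bar{g}=\bar{f}g$ that lands in $fB\cap\bar{f}B$, after which the argument is pure cancellation in the domain $B$ together with the already-used fact $A\cap B^*=A^*$.
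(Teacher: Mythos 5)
Your proof is correct. The computation is sound: for an arbitrary factorization $f=rg$ with $r\in A'$, the element $c=f\bar{g}=\bar{f}g=rg\bar{g}$ lies in $fB\cap\bar{f}B=f\bar{f}B$ by the relative primeness hypothesis, and cancelling $f$ and then $\bar{g}$ in the domain $B$ forces $rd=1$, so $r\in B^*\cap A$; since $r^{-1}\in K\cap B=A$, indeed $r\in A^*$, which is exactly the defining condition for $f\in\Delta(B)$.

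Your route differs from the paper's in one meaningful respect. The paper first invokes Lemma~\ref{ACCP1} to produce a polar factorization $f=\alpha h$ with $\alpha\in A'$ and $h\in\Delta(B)$, and then runs essentially the same intersection trick on the conjugate-symmetric element $\alpha h\bar{h}=f\bar{h}=\bar{f}h\in fB\cap\bar{f}B=f\bar{f}B=\alpha^2h\bar{h}B$ to conclude $\alpha\in B^*\cap A=A^*$. You instead verify the definition of $\Delta(B)$ directly on an arbitrary real divisor $r$, using the analogous element $f\bar{g}=\bar{f}g$. The core mechanism (a conjugate-symmetric product caught between $fB\cap\bar{f}B$ and $f\bar{f}B$) is the same, but by skipping the polar factorization you avoid any appeal to Lemma~\ref{ACCP1}, and hence to the noetherian hypothesis on $B$; your argument proves the lemma for any integral domain extension $B=A\oplus iA$. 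What the paper's phrasing buys is uniformity with the surrounding machinery, since polar factorizations are the running theme of that section; what yours buys is a shorter, more elementary, and slightly more general proof.
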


\begin{proof} By {\it Lemma\,\ref{ACCP1}}, $f$ admits a polar factorization $f=\alpha h$, where $\alpha\in A'$ and $h\in\Delta (B)$. Then $\bar{f}=\alpha \bar{h}$. Therefore:
\[
\alpha h\bar{h}\in fB\cap\bar{f}B=f\bar{f}B=\alpha^2h\bar{h}B \implies \alpha\in B^*\cap A=A^*
\]
It follows that $f\in\Delta (B)$. 
\end{proof}

The following is a special case of {\it Lemma\,2.46} in \cite{Freudenburg.17}. 

\begin{lemma} Given $f,g\in A$, if $f$ and $g$ are relatively prime in $A$, then $f$ and $g$ are relatively prime in $B$.
\end{lemma}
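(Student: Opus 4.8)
The plan is to exploit the fact that $B=A\oplus iA$ is a free $A$-module of rank two, so that extension and intersection of ideals coming from $A$ behave transparently. First I would record the elementary observation that for any ideal $I$ of $A$ the extended ideal decomposes as $IB = I\oplus iI$ inside the direct sum $B=A\oplus iA$; this is immediate from $IB = I(A\oplus iA)=I\oplus iI$ together with the directness of $A\oplus iA$. In particular, an element $a+ib$ with $a,b\in A$ lies in $IB$ if and only if $a\in I$ \emph{and} $b\in I$.

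Next I would apply this with $I=fA$ and $I=gA$. An element $x=a+ib\in B$ (with $a,b\in A$) then lies in $fB\cap gB$ precisely when $a\in fA\cap gA$ and $b\in fA\cap gA$. By hypothesis $f$ and $g$ are relatively prime in $A$, i.e. $fA\cap gA=fgA$, so $a,b\in fgA$; writing $a=fgc$ and $b=fgd$ with $c,d\in A$ gives $x=fg(c+id)\in fgB$. The reverse inclusion $fgB\subseteq fB\cap gB$ is trivial, so $fB\cap gB=fgB$, which is exactly the statement that $f$ and $g$ are relatively prime in $B$.

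A conceptually cleaner alternative is to invoke flatness: since $B$ is free, hence flat, over $A$, tensoring the left-exact sequence $0\to A/(fA\cap gA)\to A/fA\oplus A/gA$ with $B$ yields $0\to B/(fA\cap gA)B\to B/fB\oplus B/gB$, so $(fA\cap gA)B=fB\cap gB$; combined with $(fgA)B=fgB$ this finishes the argument at once. Either way, there is essentially no serious obstacle; the only point requiring a moment's care is that the proof genuinely uses $f,g\in A$ rather than arbitrary elements of $B$ — it is precisely because the relevant ideals are extended from $A$ that componentwise membership testing in the decomposition $A\oplus iA$ is legitimate.
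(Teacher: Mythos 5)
Your argument is correct. The key point, that for an ideal $I$ of $A$ one has $IB=I\oplus iI$ inside the decomposition $B=A\oplus iA$, so membership in an extended ideal can be tested componentwise, does give $fB\cap gB=(fA\cap gA)B=fgB$ exactly as you claim, and the flatness variant is a clean restatement of the same mechanism (for flat $B$ over $A$, extension of ideals commutes with finite intersections). Note, however, that the paper does not prove this lemma at all: it is quoted as a special case of Lemma~2.46 of the author's book \cite{Freudenburg.17}, so there is no internal proof to compare against. What your write-up buys is a short, self-contained justification using only the structure $B=A\oplus iA$ already established in Section~3, and your flatness remark makes clear that nothing is special about the quadratic extension $A\subset A[i]$ --- the statement holds for any flat (in particular free) ring extension, with the degree-two structure playing no role beyond guaranteeing freeness. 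The one point worth stating explicitly, which you implicitly use, is that the representation $x=a+ib$ with $a,b\in A$ is unique (directness of the sum), which is what legitimizes the componentwise membership test; since the paper records $B=A\oplus iA$ as a free $A$-module of rank~2, this is available.
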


For an abelian group $G$, the {\bf square-torsion subgroup} of $G$ is:
\[
\{ \gamma\in G\, |\, \gamma^2=1\}
\]
$G$ is {\bf square-torsion free} if its square-torsion subgroup is trivial. 

Suppose that $G$ is an abelian group which is torsion free. In general, $G$ need not be free abelian. However, if $G$ is finitely generated as a $\Z$-module, then $G$ is free abelian. 

For a set $S$, let $\mathcal{F}(S)$ be the free abelian group with basis $S$. 
Recall that Dedekind showed any subgroup of a free abelian group is free abelian. 

Let $U$ be a UFD, let $F={\rm frac}(U)$, and let $P$ be a complete set of representative prime elements in $U$. Then $F^*=U^*\times \mathcal{F}(P)$. 

%%%%%%%%%%%%%%%%%%%%%%%%%%%%%%%%%%%%%%%%%%%%%%%%%%%%%%%%%%%%%%%%%%%%%%%%%%%%%%

\section{The Polar Group of $A$}\label{polar-group}

Let $L^*$ be the unit group of $L={\rm frac}(B)$ and $K^*$ the unit group of $K={\rm frac}(A)$. 

\begin{definition} The {\bf polar group} of $A$ is the quotient group $L^*/B^*K^*$, denoted $\Pi (A)$. \end{definition}
An equivalent definition is as follows. 
Let $M\subset B'$ be the multiplicative submonoid $M=B^*\cdot A'$, noting that $\overline{M}=M$, and that $M\cap\Delta (B)=B^*$. 
Define a relation on $B'$ as follows.
\smallskip
\begin{center}
$f\sim g$ if and only if $f\bar{g}\in M$
\end{center}
\smallskip
This is an equivalence relation and the equivalence classes $[f]$ form an abelian group $G$, where $[f]^{-1}=[\bar{f}]$. Since the map
$L^*\to \Pi (A)$ defined by $f/g\to [f\bar{g}]$ is surjective and has kernel equal to $B^*K^*$, we see that $G=\Pi (A)$. 

Observe that, if $A$ is a field, then $B=A[i]$ is also a field and $\Pi (A)=\{ 1\}$. 

We use the following notation.
\begin{enumerate}
\item For any subset $T\subset \Pi (A)$, $\langle T\rangle\subset\Pi (A)$ denotes the subgroup generated by $T$. 
\item For any subset $S\subset B'$, $[S]\subset \Pi (A)$ denotes the set $\{ [x]\,\vert\, x\in S\}$,
and $\langle S\rangle$ denotes the subgroup $\langle [S]\rangle$.
\item If $A$ is affine over $\R$ and $X={\rm Spec}(A)$, then $\Pi (X)=\Pi (A)$. 
\end{enumerate}

We make an analogous definition for projective varieties. Suppose that $A$ is affine over $\R$ and is $\Z$-graded: $A=\bigoplus_{n\in\Z}A_n$.
Extend the grading to $B=\bigoplus_{n\in\Z}B_n$, noting that $B_0$ is a subring of $B$.
Then $X={\rm Proj}(A)$ is a real form of $Y={\rm Proj}(B)$. Define:
\[
L_0=\{ f/g\, |\, f,g\in B_n \,\, {\rm for\,\, some}\,\, n\in\Z \, ,\, g\ne 0\}  \quad {\rm and}\quad K_0=K\cap L_0
\]
Then $L_0$ is a subfield of $L$ which is algebraically closed in $L$, and $K_0$ is a subfield of $K$ which is algebraically closed in $K$. 
Define the {\bf polar group} of $X$ by:
\[
\Pi (X)=L_0^*/B_0^*K_0^*
\]
We have:
\[
L_0^*/B_0^*K_0^* = L_0^*/(L_0^*\cap B^*K^*) \cong L_0^*B^*K^*/B^*K^* \subset L^*/B^*K^*
\]
Therefore, $\Pi ({\rm Proj}(A))$ can be viewed as a subgroup of $\Pi ({\rm Spec}(A))$. 

\begin{example}\label{example2} {\rm Consider the rings:
\[
 A=\R [x]=\R^{[1]} \quad {\rm and}\quad B=\C [x]=\C^{[1]}
 \]
Since $B$ is a UFD, we see that $L^*=\C (x)^*=\C^*\times G$, where $G\cong\mathcal{F}(\C )$. In particular, 
$z_1^{e_1}\cdots z_n^{e_n}\in\mathcal{F}(\C )$ corresponds to $(x-z_1)^{e_1}\cdots (x-z_n)^{e_n}\in G$, where $e_j\in\Z$. 
Define subgroups $H_1,H_2\subset G$ by:
\[
H_1=\langle x-z\, |\, z\in\HH \rangle \quad {\rm and}\quad H_2=\langle (x-z)(x-\bar{z}) , x-r\, |\, z\in\C\, ,\, r\in\R\rangle
\]
Then $G=H_1\times H_2$ and $K^*=\R^*H_2$. Therefore, $\Pi (\A_{\R}^1)=H_1\cong\mathcal{F}(\HH )$, 
the free abelian group consisting of monic rational functions on $\C$ all of whose zeros and poles lie in $\HH$. 
}
\end{example}

\begin{example}\label{proj-line2} {\rm Continuing the preceding example, note that 
the subgroup of $\C (x)^*$ of functions of degree zero are precisely those which extend to the Riemann sphere $\C\cup\{ \infty\}=\PP^1_{\C}$. In this way, we see that 
$\Pi (\PP^1_{\R})$ is the subgroup of $\Pi (\A^1_{\R})$ 
consisting of products of the form $[x-z_1]^{e_1}\cdots [x-z_n]^{e_n}$, where $z_j\in\HH$ and $e_1+\cdots +e_n=0$. 
By Dedekind's theorem, $\Pi (\PP^1_{\R})$ is a free abelian group. For any fixed $\eta\in\HH$, a basis for $\Pi (\PP^1_{\R})$  is given by:
\[
\{ [x-z][x-\eta ]^{-1}\, |\, z\in\HH\setminus \{\eta\} \}
\]
Therefore, $\Pi (\PP^1_{\R})\cong\mathcal{F}(\HH\setminus \{ i\})$. }
\end{example} 

\begin{remark} {\rm $B$ can have distinct real forms, and the definition of $[f]$ depends on the particular real form $A$ of $B$. Thus, when more than one real form of $B$ is being considered, say $A$ and $\tilde{A}$, we will distinguish their equivalence classes by writing $[f]_A$ and $[f]_{\tilde{A}}$. Similarly, if $A$ is a real form of $B$ and $\tilde{A}$ is a real form of $\tilde{B}$, we also write  $[f]_A$ and $[f]_{\tilde{A}}$. Similarly, we may write $[X]_A$ for $[X]$ or $\langle X\rangle_A$ for $\langle X\rangle$ in certain situations. }
\end{remark}

\subsection{Localizations}
Let $S\subset A'$ be a multiplicatively closed set. Then $S^{-1}A$ is an $\R$-domain which is a real form of $S^{-1}B=\C\otimes_{\R}S^{-1}A$. Define the multiplicatively closed set 
$\hat{S}\subset B$ by:
\[
\hat{S}=\{ f\in B\,\vert\, s\in fB \,{\rm for\,\, some}\, s\in S\}
\]
Then $S^{-1}B=\hat{S}^{-1}B$. 

\begin{proposition}\label{localize} Under the foregoing hypotheses:
\begin{itemize} 
\item [{\bf (a)}] $\Pi (S^{-1}A)\cong \Pi (A)/\langle\hat{S}\rangle_A$
\smallskip
\item [{\bf (b)}] If $B$ is a UFD, then 
$\Pi (A)\cong\langle\hat{S}\rangle_A \times \Pi (S^{-1}A)$
\end{itemize}
\end{proposition}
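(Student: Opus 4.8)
The plan is to analyze the surjection $\varphi:L^*\to\Pi(A)$, $f/g\mapsto[f\bar g]$, after passing to the localization. First I would set up the comparison map: since $S^{-1}B=\hat S^{-1}B$ and $\hat S$ is multiplicatively closed with $\overline{\hat S}=\hat S$, the ring $S^{-1}A$ is a real form of $S^{-1}B$ with the same fraction fields $L$ and $K$. Hence $\Pi(S^{-1}A)=L^*/(S^{-1}B)^*K^*$, and the natural map $\Pi(A)\to\Pi(S^{-1}A)$ is surjective because both have the same $L^*$ on top and the unit group only grows. For part (a), the key computation is to identify the kernel: $(S^{-1}B)^*$ is generated as a group by $B^*$ together with $\hat S$ (an element of $B$ becomes a unit after inverting $S$ exactly when it divides some $s\in S$, which is the definition of $\hat S$). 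Therefore $(S^{-1}B)^*K^*=B^*K^*\cdot\langle\hat S\rangle$ inside $L^*$, where here $\langle\hat S\rangle$ means the subgroup of $L^*$ generated by $\hat S$; passing to the quotient by $B^*K^*$ this becomes exactly the subgroup $\langle\hat S\rangle_A\subset\Pi(A)$. Thus $\Pi(S^{-1}A)=\Pi(A)/\langle\hat S\rangle_A$, which is (a). I would be a little careful that $\langle\hat S\rangle_A$ as defined in the paper (the subgroup generated by the classes $[h]=[h\bar 1]$ for $h\in\hat S$) really matches the image of $\langle\hat S\rangle\subset L^*$ under $\varphi$; since $\varphi(h)=[h\bar h]$ and $[\bar h]=[h]^{-1}$, one has $[h\bar h]=[h]\cdot[h]^{-1}\cdot[\text{adjustment}]$... actually $\varphi(h)=[h\bar h]$ while the intended generator is $[h]$, so I should instead note that $\varphi(h/\bar h)=[h\bar h\cdot h\bar h]$ is not quite it either — the cleanest route is: $h\in(S^{-1}B)^*$ so $[h]=1$ in $\Pi(S^{-1}A)$, and the kernel of $\Pi(A)\to\Pi(S^{-1}A)$ is generated by all $[f]$ with $f\in(S^{-1}B)^*\cap B'$, which by the unit description is generated by $[B^*]=1$ and $[\hat S]=\langle\hat S\rangle_A$. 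This is the honest argument and avoids the conjugate bookkeeping.

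For part (b), assume $B$ is a UFD. The strategy is to produce a splitting of the surjection $\Pi(A)\twoheadrightarrow\Pi(S^{-1}A)$ whose image is complementary to $\langle\hat S\rangle_A$. Here I would invoke the structure theory recalled in Section 3: $L^*=B^*\times\mathcal F(P)$ where $P$ is a complete set of prime representatives of $B$, and by conjugation-invariance we may choose $P$ so that $\bar P=P$, partitioning $P$ into the conjugation-fixed primes and the orbits $\{p,\bar p\}$ with $p\neq\bar p$ (up to associates). The subset $P_S\subseteq P$ of primes dividing some element of $S$ is conjugation-stable (since $S\subset A$, hence $\bar S=S$), and $\hat S$ modulo $B^*$ is exactly the free submonoid on $P_S$; correspondingly $\langle\hat S\rangle_A$ is the image in $\Pi(A)$ of the subgroup $\mathcal F(P_S)\subset L^*$. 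Writing $P=P_S\sqcup P'$, we get $L^*=B^*\times\mathcal F(P_S)\times\mathcal F(P')$, and $S^{-1}B$ is again a UFD with prime set $P'$, so $\Pi(S^{-1}A)$ is a quotient of $B^*\times\mathcal F(P')$. I would then check that the composite $\mathcal F(P')\hookrightarrow L^*\xrightarrow{\varphi}\Pi(A)$ has image mapping isomorphically onto $\Pi(S^{-1}A)$ under the quotient map, and that this image together with $\langle\hat S\rangle_A$ generates $\Pi(A)$ and intersects it trivially — giving the internal direct product $\Pi(A)=\langle\hat S\rangle_A\times\Pi(S^{-1}A)$.

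The main obstacle is the trivial-intersection and joint-generation check in (b): one must rule out relations in $\Pi(A)$ mixing primes from $P_S$ with primes from $P'$. The generation part is easy since $L^*$ is generated by $B^*$, $\mathcal F(P_S)$, $\mathcal F(P')$ and $B^*K^*$ is killed. For the intersection, suppose $[u]\in\langle\hat S\rangle_A\cap(\text{image of }\mathcal F(P'))$; lifting, this says some $P_S$-supported element and some $P'$-supported element of $L^*$ differ by an element of $B^*K^*$. The point is that $K^*\subset L^*=B^*\times\mathcal F(P)$ decomposes compatibly with the partition $P=P_S\sqcup P'$: an element of $K^*$, being conjugation-fixed, has a divisor in $B$ of the form $\prod p^{a_p}$ with $a_p=a_{\bar p}$, and its "$P_S$-part" $\prod_{p\in P_S}p^{a_p}$ is the class of an element of $S^{-1}(\text{stuff})$ that still lies in $K$ — more precisely, the projection $L^*\to\mathcal F(P_S)$ carries $K^*$ into the subgroup realized by elements of $K^*$ (using that $P_S$ is conjugation-stable, this projection of a norm-form is again a norm-form). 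Granting that $K^*$ respects the splitting $L^*=(B^*\times\mathcal F(P_S))\times\mathcal F(P')$ in the sense that $K^*=(K^*\cap(B^*\times\mathcal F(P_S)))\cdot(K^*\cap\mathcal F(P'))$ up to the established decompositions, the two pieces of any alleged mixed relation separate, forcing $[u]=1$. I expect the careful verification that $K^*$ is "compatibly graded" by $P_S$ versus $P'$ to be where the real work lies; everything else is bookkeeping with the UFD structure theorem and Dedekind's theorem on subgroups of free abelian groups.
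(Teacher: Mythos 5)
Your part (a) is correct and is essentially the paper's own argument: the identity on $L^*$ induces a surjection $\Pi (A)\to\Pi (S^{-1}A)$, and since an element of $B$ becomes a unit in $S^{-1}B$ exactly when it divides some element of $S$, the kernel is generated by the classes of elements of $\hat S$, i.e.\ equals $\langle\hat S\rangle_A$; your ``honest route'' through $(S^{-1}B)^*\cap B'$ is fine.

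Part (b) is where there is a genuine gap, and it sits exactly at the step you flagged as ``where the real work lies.'' The claim that $K^*$ splits compatibly with the partition $P=P_S\sqcup P'$ --- equivalently, that the image of $\mathcal{F}(P')$ in $\Pi (A)$ meets $\langle\hat S\rangle_A$ trivially and maps injectively to $\Pi (S^{-1}A)$ --- is false in general. Take $A=\R [x,y]/(x^2+y^2-1)$, $B=\C [t,t^{-1}]$ with $t=x+iy$, $\bar t=t^{-1}$, and $S=\{ (1+y)^n\,\vert\, n\in\N\}$. Since $1+y=(2it)^{-1}(t+i)^2$, the only prime dividing elements of $S$ is $t+i$, so $P_S=\{ t+i\}$ up to associates and $t-i\in P'$. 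But $(t-i)\,\overline{(t+i)}=(t-i)(t^{-1}-i)=-2ix\in B^*A'=M$, so $[t-i]_A=[t+i]_A$, and this class is nontrivial of order $2$ (as in {\it Cor.\,\ref{circle}} and {\it Example\,\ref{S1}}). Hence the image of $\mathcal{F}(P')$ contains the nontrivial element $[t+i]_A$ of $\langle\hat S\rangle_A$, so the intersection is not trivial, and the map from that image to $\Pi (S^{-1}A)$ kills $[t-i]_A\ne 1$, so it is not injective. Your heuristic about conjugation-fixed elements fails for the same reason: $x=(2t)^{-1}(t+i)(t-i)\in K^*$ has $P_S$-part $t+i$ and $P'$-part $t-i$, and neither lies in $B^*K^*$. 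The phenomenon you did not account for is a prime $p$ with $\bar p=up$ for a nontrivial unit $u\in B^*$, which makes $[p]$ a nontrivial $2$-torsion class; such primes allow relations in $\Pi (A)$ that mix $P_S$ with $P'$, so the alleged internal direct product $\langle\hat S\rangle_A\times \mathrm{im}\,\mathcal{F}(P')$ simply is not one. Your argument does go through when $\Pi (A)$ is torsion free, i.e.\ under the equivalent conditions of {\it Thm.\,\ref{free}}, but the proposition is asserted for an arbitrary UFD $B$. Note also that the paper's proof takes a different route --- a section of $\pi$ sending $[f]_{S^{-1}A}$ to the class of the part of $f$ coprime to $\hat S$ --- and in the example above the same subtlety resurfaces there as the question of independence of the chosen representative (one has $[x]_{S^{-1}A}=[1]_{S^{-1}A}$ while the recipe returns $[t-i]_A\neq [1]_A$), so the difficulty you ran into is intrinsic to this torsion phenomenon and any complete argument must address it, for instance by first splitting off the $2$-torsion coming from primes with $\bar pB=pB$.
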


\begin{proof} Consider the surjective homomorphism $\pi : \Pi (A)\to\Pi (S^{-1}A)$ sending $[f]_A$ to $[f]_{S^{-1}A}$. Since the kernel of $\pi$ is $\langle\hat{S}\rangle_A$, part (a) follows.

Suppose that $B$ is a UFD, and consider the short exact sequence:
\[
1\to \langle\hat{S}\rangle_A\hookrightarrow\Pi (A)\xrightarrow{\pi} \Pi (S^{-1}A)\to 1
\]
It must be shown that $\pi$ has a section. 

Let nonzero $f\in S^{-1}B$ be given, and write $f=b/s$ for $b\in B'$ and $s\in S$. 
Since $B$ is a UFD, there exists $t\in \hat{S}\cup\{ 1\}$ and $g\in B'$ such that $b=tg$ and $\gcd (g,\hat{S})=1$. Note that $g$ is uniquely determined up to associates, and that 
$[f]_{S^{-1}A}=[g]_{S^{-1}A}$. Define the function:
\[
\sigma : \Pi (S^{-1}A)\to\Pi (A) \,\, ,\,\, [f]_{S^{-1}A}\mapsto [g]_A
\]
Since $g$ is unique up to associates, $\sigma$ is well-defined. In addition, it is easily checked that (1) $\sigma$ a group homomorphism; (2) 
$\pi\sigma$ is the identity on $\Pi (S^{-1}A)$; and (3) $\sigma\pi$ is the identity on $\sigma (\Pi (S^{-1}A))$. This confirms part (b).
 \end{proof}

\subsection{Subalgebras}

\begin{proposition}\label{subalgebra}  Let $R\subset A$ be a subalgebra, and define:
\[
\varphi :\Pi (R)\to \Pi (A)\,\, ,\,\, \varphi ([f]_R) = [f]_A 
\]
\begin{itemize} 
\item [{\bf (a)}] $\varphi$ is a well-defined group homomorphism. 
\item [{\bf (b)}] The kernel of $\varphi$ is $\langle M\cap T\rangle_R$ , where $T=\C\otimes_{\R}R\subset B$. 
\item [{\bf (c)}] If $T$ is factorially closed in $B$, then $\Pi (R)$ is a subgroup of $\Pi (A)$.
\end{itemize}
\end{proposition}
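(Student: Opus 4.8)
The plan is to prove the three parts in order, working with the monoid description of $\Pi$ from Section~\ref{polar-group}. Attach to $R$ the submonoid $M_R=T^*\cdot R'\subset T'$, so that $\Pi(R)$ consists of the classes $[f]_R$ of elements $f\in T'$ under the relation $f\sim_R g\Leftrightarrow f\bar g\in M_R$, with $[f]_R=1$ precisely when $f\in M_R$, and with group law $[f]_R[g]_R=[fg]_R$ (the analogous statements holding for $A$). Since $R\subset A$ yields $T=R[i]\subset A[i]=B$ with $T^*\subset B^*$ and $R'\subset A'$, we have $M_R\subset M$. For part (a): if $f,g\in T'$ and $f\sim_R g$, then $f\bar g\in M_R\subset M$, so $f\sim_A g$ and $\varphi$ is well defined; it is a homomorphism because $\varphi([f]_R[g]_R)=\varphi([fg]_R)=[fg]_A=[f]_A[g]_A$.

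For part (b): $\varphi([f]_R)=1$ means $f\in M$, so $\ker\varphi=\{[f]_R\mid f\in T'\cap M\}$. Because $M$ is multiplicatively closed this set is closed under products, and because $\overline{M}=M$ and $\overline{T}=T$ it is closed under inverses; hence it is already a subgroup, so it coincides with the subgroup $\langle M\cap T\rangle_R$ that it generates.

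For part (c): assume $T$ is factorially closed in $B$; by part (b) it suffices to show $M\cap T\subset M_R$, for then $\ker\varphi$ is trivial and part (a) gives the conclusion. Let $x\in M\cap T$ and write $x=ua$ with $u\in B^*$ and $a\in A'$. Applying factorial closedness to $ua=x\in T\setminus\{0\}$ gives $u,a\in T$, and applying it to $u\cdot u^{-1}=1\in T$ gives $u^{-1}\in T$, hence $u\in T^*$. It remains to see $a\in R'$: the direct sum $B=A\oplus iA$ restricts to $T=R\oplus iR$ because $R\subset A$, so any element of $A$ that lies in $R\oplus iR$ has imaginary part $0$ and real part in $R$; thus $a\in A\cap T=R$, and $a\ne 0$ gives $a\in R'$. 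Therefore $x=ua\in T^*\cdot R'=M_R$, as needed.

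Parts (a) and (b) are formal once the monoid picture is in place; the only real content lies in part (c), specifically the identity $A\cap T=R$ (coming from uniqueness in $B=A\oplus iA$) together with the use of factorial closedness to pull the factorization $x=ua$ back from $B$ into $T$ and to upgrade $u\in B^*\cap T$ to $u\in T^*$. I expect that to be the main — indeed the only — obstacle.
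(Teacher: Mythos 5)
Your argument is correct and follows essentially the same route as the paper: part (a) via $M_R=T^*R'\subset M$, part (b) by identifying the kernel with the classes of $M\cap T$, and part (c) by showing $M\cap T=T^*R'$ when $T$ is factorially closed. The paper leaves (b) and (c) as one-line assertions, and your verification (pulling the factorization $x=ua$ into $T$, upgrading $u$ to $T^*$, and using $A\cap T=R$ from uniqueness in $B=A\oplus iA$) correctly supplies exactly the omitted details.
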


\begin{proof}  For part (a):
\[
[f]_R=[g]_R \implies f\bar{g}\in R'T^*\subset M \implies [f]_A=[g]_A
\]
This shows that $\varphi$ is well-defined, and it is clearly a homomorphism. Part (b) follows by definition. 

Let $R'=R\setminus\{ 0\}$.  
By part (b), $\varphi$ is injective if and only if $R'T^* =M\cap T$, and this equality holds if $T$ is factorially closed in $B$. 
\end{proof}

\subsection{Prime Ideals}

Let $\mathfrak{p}\subset A$ be a prime ideal such that $\mathfrak{p}B$ is prime in $B$. 
Then $S=A\setminus\mathfrak{p}$ and $\hat{S}=B\setminus\mathfrak{p}B$ are multiplicatively closed sets. By {\it Prop.\,\ref{localize}}, the sequence
\[
1\to \langle B\setminus\mathfrak{p}B\rangle_A \to \Pi (A) \to \Pi (A_{\mathfrak{p}})\to 1
\]
is exact.

\begin{proposition}\label{prime} Let $\pi : B\to B/\mathfrak{p}B$ be the standard surjection, and 
define the function:
\[ 
\psi : \langle B\setminus\mathfrak{p}B\rangle_A\to \Pi (A/\mathfrak{p}) \,\, ,\,\, \psi ([f]_A)=[\pi (f)]_{A/\mathfrak{p}}
\]
Then $\psi$ is a well-defined surjective group homomorphism, and the kernel of $\psi$ is $\langle (M+\mathfrak{p}B)\cap B'\rangle_A$.
\end{proposition}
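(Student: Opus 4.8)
The plan is to verify, in turn, that $\psi$ is well defined, that it is a homomorphism, that it is surjective, and that its kernel is as stated, after two preliminary observations. First, $A/\mathfrak{p}$ is a noetherian integral $\R$-domain, $B/\mathfrak{p}B=(A/\mathfrak{p})[i]=\C\otimes_{\R}(A/\mathfrak{p})$ is a domain by hypothesis, and the conjugation $i\mapsto -i$ descends to $B/\mathfrak{p}B$ because $\overline{\mathfrak{p}B}=\mathfrak{p}B$ (as $\mathfrak{p}\subset A$), with fixed ring $A/\mathfrak{p}$; hence $\R$ is algebraically closed in $A/\mathfrak{p}$, so $\Pi(A/\mathfrak{p})$ is defined as in Section~\ref{polar-group}. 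Second, since $\mathfrak{p}B$ is prime and stable under conjugation, the set $\{[f]_A : f\in B\setminus\mathfrak{p}B\}$ is closed under products and under $[f]_A\mapsto [f]_A^{-1}=[\bar f]_A$, so it coincides with the subgroup $\langle B\setminus\mathfrak{p}B\rangle_A$; likewise every element of $\Pi(A/\mathfrak{p})$ has the form $[\xi]_{A/\mathfrak{p}}$ for a single $\xi\in(B/\mathfrak{p}B)'$. Thus $\psi$ is given on all of its domain by the stated rule once that rule is unambiguous.

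The first three assertions are then routine. If $f,g\in B\setminus\mathfrak{p}B$ with $[f]_A=[g]_A$, then $f\bar g=ua$ for some $u\in B^*$, $a\in A'$; applying $\pi$ gives $\pi(f)\,\overline{\pi(g)}=\pi(u)\pi(a)$ with $\pi(u)\in (B/\mathfrak{p}B)^*$, and $\pi(f)\ne 0\ne\pi(g)$ forces $\pi(a)\in(A/\mathfrak{p})'$, so $\pi(f)\,\overline{\pi(g)}\in (B/\mathfrak{p}B)^*(A/\mathfrak{p})'$ and $[\pi(f)]_{A/\mathfrak{p}}=[\pi(g)]_{A/\mathfrak{p}}$; this proves $\psi$ is well defined, and the homomorphism property is immediate since $\pi(fg)=\pi(f)\pi(g)$ and $fg\in B\setminus\mathfrak{p}B$. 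For surjectivity, write a class of $\Pi(A/\mathfrak{p})$ as $[\xi]_{A/\mathfrak{p}}$ with $\xi\in(B/\mathfrak{p}B)'$, lift $\xi=\pi(f)$ with $f\in B$; then $\xi\ne 0$ gives $f\in B\setminus\mathfrak{p}B$ and $\psi([f]_A)=[\xi]_{A/\mathfrak{p}}$.

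The heart of the proof is the kernel, where by the equivalent description of the polar group $[f]_A\in\ker\psi$ (for $f\in B\setminus\mathfrak{p}B$) if and only if $\pi(f)\in(B/\mathfrak{p}B)^*(A/\mathfrak{p})'$. For $\langle(M+\mathfrak{p}B)\cap B'\rangle_A\subseteq\ker\psi$: an $h=m+q$ with $m=ua\in M$, $q\in\mathfrak{p}B$, $h\notin\mathfrak{p}B$, has $\pi(a)\ne 0$ and $\pi(h)=\pi(m)\in(B/\mathfrak{p}B)^*(A/\mathfrak{p})'$, so $\psi([h]_A)=1$; those $h$ in $(M+\mathfrak{p}B)\cap\mathfrak{p}B$ need a small separate argument, reducing them to the previous type modulo $B^*K^*$. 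For the reverse inclusion, write $\pi(f)=\bar v\,\bar a$ with $\bar v\in(B/\mathfrak{p}B)^*$, $\bar a\in(A/\mathfrak{p})'$, choose lifts $w\in B\setminus\mathfrak{p}B$ of $\bar v$ and $a\in A$ of $\bar a$ (so $a\notin\mathfrak{p}$ and $q:=f-wa\in\mathfrak{p}B$), and use the identity
\[
f\,\overline{wa}=(w\bar w)\,a^2+q\,\bar w\,a ,
\]
whose right side lies in $M+\mathfrak{p}B$ since $w\bar w\in A$ and $w\bar w a^2\in A'$; hence $f\,\overline{wa}\in(M+\mathfrak{p}B)\cap B'$, and, as $a\in A'$, modulo $\langle(M+\mathfrak{p}B)\cap B'\rangle_A$ we obtain $[f]_A=[wa]_A=[w]_A$. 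Everything therefore reduces to showing $[w]_A\in\langle(M+\mathfrak{p}B)\cap B'\rangle_A$ whenever $\pi(w)$ is a unit of $B/\mathfrak{p}B$, and I expect this to be the main obstacle. A first reduction: for two lifts $w,w_0$ of the same unit, $w\,\overline{w_0}=(w_0\overline{w_0})+(w-w_0)\overline{w_0}\in(M+\mathfrak{p}B)\cap B'$ (note $w_0\overline{w_0}\in A'$, since its image in $A/\mathfrak{p}$ is a unit), so the class $[w]_A$ modulo the subgroup depends only on $\pi(w)$ and defines a homomorphism out of $(B/\mathfrak{p}B)^*$ that kills $(A/\mathfrak{p})^*$ and the image of $B^*$; the remaining task is to show this homomorphism vanishes, i.e.\ that every unit of $B/\mathfrak{p}B$ is, up to a unit of $A/\mathfrak{p}$, the reduction of an element of $M$, which one attacks by decomposing such a lift into real and imaginary parts modulo $\mathfrak{p}B$ and using that its norm already reduces to a unit of $A/\mathfrak{p}$. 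With that in hand, the first isomorphism theorem completes the identification of $\ker\psi$.
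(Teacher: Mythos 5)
Your verifications of well-definedness, the homomorphism property, and surjectivity are correct (and more detailed than the paper, whose entire proof is the assertion that these facts and the kernel identification are ``clear from the definition''). The gap is exactly where you flagged it: the kernel, and it occurs in both inclusions. First, the ``small separate argument'' you promise for elements $h\in(M+\mathfrak{p}B)\cap\mathfrak{p}B$ does not exist: if $0\ne p\in\mathfrak{p}$ and $g\in B'$ is arbitrary, then $pg=p+p(g-1)\in(M+\mathfrak{p}B)\cap B'$ and $[pg]_A=[g]_A$, so, read literally, $\langle(M+\mathfrak{p}B)\cap B'\rangle_A$ is all of $\Pi(A)$ whenever $\mathfrak{p}\ne 0$, and the inclusion into $\ker\psi$ fails whenever $\psi$ is nontrivial. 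Concretely, for $A=\R[x,y]$, $\mathfrak{p}=yA$, the element $y(x-i)=y+y(x-i-1)$ lies in $(M+\mathfrak{p}B)\cap B'$, yet $\psi([y(x-i)]_A)=\psi([x-i]_A)=[x-i]_{\R[x]}\ne 1$ in $\Pi(\R[x])\cong\mathcal{F}(\HH)$. So one is forced to read the statement with $(M+\mathfrak{p}B)\cap(B\setminus\mathfrak{p}B)$, which is in effect what you use in the reverse inclusion.

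Second, with that reading, your reduction of the reverse inclusion to ``every $w$ with $\pi(w)\in(B/\mathfrak{p}B)^*$ has $[w]_A$ in the subgroup'' is correct, but the step you leave open is not a routine verification: it is false in general, so no argument will close it. Take $A=\R[x,y]$ and $\mathfrak{p}=(x^2+y^2-1)A$, so $B/\mathfrak{p}B\cong\C[t,t^{-1}]$ with $t=\pi(x+iy)$ and the induced conjugation $\sum c_kt^k\mapsto\sum\bar{c}_kt^{-k}$. Since $t$ is a unit, $[x+iy]_A\in\ker\psi$. But $[x+iy]_A\notin\langle(M+\mathfrak{p}B)\cap(B\setminus\mathfrak{p}B)\rangle_A$: let $w(h)$ be the sum of the top and bottom exponents of a nonzero Laurent polynomial $h$; then $w$ is additive on products, $w=0$ on $\C^*$ and on $\pi(A\setminus\mathfrak{p})$ (conjugation-fixed elements have symmetric support), while $w(t)=2$. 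A relation $[x+iy]_A=\prod_j[g_j]^{\pm1}$ with $g_j\in(M+\mathfrak{p}B)\setminus\mathfrak{p}B$ yields $(x+iy)\,\overline{G}=ca$ with $c\in\C^*$, $a\in A\setminus\mathfrak{p}$ and all factors outside the prime $\mathfrak{p}B$; applying $\pi$ and then $w$ gives $2=0$, a contradiction. So your ``norm reduces to a unit'' strategy cannot succeed: the obstruction is precisely the units of $B/\mathfrak{p}B$ that are not of the form $\pi(B^*)(A/\mathfrak{p})^*$. Your own reduction in fact shows what is true, namely $\ker\psi$ is generated by $\langle(M+\mathfrak{p}B)\cap(B\setminus\mathfrak{p}B)\rangle_A$ together with the classes of lifts of units of $(B/\mathfrak{p}B)^*$, and your argument does complete the proof under the extra hypothesis $(B/\mathfrak{p}B)^*=\pi(B^*)\cdot(A/\mathfrak{p})^*$. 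The paper offers no argument for the kernel claim, so the obstacle you identified is a genuine defect of the proposition as stated, not a trick you missed.
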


\begin{proof}
$\psi$ is a well-defined function, since $f\in\hat{S}$ iff $\pi (f)\in (B/\mathfrak{p}B)'$, and $\hat{S}\subset B'$. 
In addition, it is clear from the definition that $\psi$ is a surjective group homomorphism, and that the kernel of $\psi$ is $\langle (M+\mathfrak{p}B)\cap B'\rangle_A$.
\end{proof}

%%%%%%%%%%%%%%%%%

\subsection{Transversal Generating Sets} 

Recall that the action of $\Gamma =B^*\rtimes\Z_2$ restricts to $\Delta (B)_1$, and that $B'$ is generated as a monoid by $M=B^*A'$ and $\Delta (B)_1$. 
Therefore, $\langle\Delta (B)_1\rangle =\Pi (A)$. 
Let $\mathcal{T}\subset\Delta (B)_1$ be a transversal of the $\Gamma$-action.\footnote{Here, we invoke the Axiom of Choice.} 
Given $f\in\Delta (B)_1$, let $\mathcal{O}_f$ denote the orbit of $f$. Given $g\in\Delta (B)_1$, let $h\in\mathcal{T}$ be such that $g\in\mathcal{O}_h$, 
and let $\sigma =(b,\lambda )\in\Gamma$ be such that $g=\sigma\cdot h$. Then
$g=bh$ or $g=b\bar{h}$, which implies $[g]=[h]$ or $[g]=[h]^{-1}$. We thus conclude that $\langle\mathcal {T}\rangle =\langle\Delta (B)_1\rangle =\Pi (A)$. 
The set $[ \mathcal{T}]$ is a {\bf transversal generating set} of $\Pi (A)$. 

%%%%%%%%%%%%%%%%%%%%%%%%%%%%%%%%%%%%%%%%%%%%%%%%%%%%%%%%%%%%%%%%%%%%%%%%%%%%%%

\section{Unique Factorization Domains}\label{UFD}

\subsection{Freeness Criterion} 
\begin{theorem}\label{free} Suppose that $B$ is a UFD. The following conditions are equivalent.
\begin{enumerate}
\item Every element of $B'$ has a unique polar factorization up to units.
\item Given $f\in B'$, $f\in \Delta (B)$ if and only if $\gcd (f,\bar{f})=1$. 
\item Given $f,g\in\Delta (B)$, $[f]=[g]$ if and only if $fB=gB$.
\item $\Pi (A)$ is square-torsion free.
\item $\Pi (A)$ is torsion free.
\item $\Pi (A)$ is a free abelian group.
\end{enumerate}
When these conditions hold, a basis for $\Pi (A)$ is given by a transversal generating set.
\end{theorem}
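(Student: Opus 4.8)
The plan is to establish the cycle of implications $(1)\Rightarrow(2)\Rightarrow(3)\Rightarrow(4)\Rightarrow(5)\Rightarrow(6)\Rightarrow(1)$, using the UFD hypothesis on $B$ throughout so that $\gcd$'s behave, and so that by \emph{Lemma\,\ref{conj-gcd}} the condition $\gcd(f,\bar f)=1$ is \emph{sufficient} for $f\in\Delta(B)$ — the content of (2) being that this is also \emph{necessary}. For $(1)\Rightarrow(2)$: if $f\in\Delta(B)$ but $d=\gcd(f,\bar f)\notin B^*$, I would write $f=dg$; then $\bar d\mid\bar f$ and $\bar d\mid f$, so $\gcd(d,\bar d)$-type considerations show $d\bar d$ divides $f$ up to the structure of the factorization, and since $d\bar d$ is fixed by conjugation it lies in $A$ after adjusting by a unit (here one uses that a UFD element equal to its own conjugate up to a unit is, up to a unit, in $A$ — this needs the square-torsion-free-ish argument about the unit $u$ with $u\bar u$... actually one gets $r\in A'$ with $r\mid f$, $r\notin A^*$, contradicting $f\in\Delta(B)$); this argument, together with uniqueness of polar factorization, is what forces $\Delta(B)=\{f:\gcd(f,\bar f)=1\}$. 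For the reverse inclusion in (2) I just cite \emph{Lemma\,\ref{conj-gcd}}.

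For $(2)\Rightarrow(3)$: given $f,g\in\Delta(B)$ with $[f]=[g]$, by definition $f\bar g\in M=B^*A'$, say $f\bar g=u a$ with $u\in B^*$, $a\in A'$. Then $\overline{f\bar g}=\bar f g=\bar u a$, and multiplying, $f\bar f\, g\bar g = u\bar u a^2$; since $B$ is a UFD and $\gcd(f,\bar f)=1=\gcd(g,\bar g)$ (by (2)), comparing prime factorizations on both sides forces $fB=gB$ (the primes of $f$ must come from those of $g$ and vice versa, with the "barred" primes matching separately). The converse in (3) is immediate since $fB=gB$ gives $f=ug$, $u\in B^*$, so $f\bar g = u\, g\bar g$ and $g\bar g\in A$ (it is conjugation-invariant, hence in $A$ up to the usual unit adjustment), whence $[f]=[g]$. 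For $(3)\Rightarrow(4)$: if $[f]^2=1$ then $[f]=[\bar f]$, so taking a $\Delta(B)$-representative $h$ (via polar factorization, \emph{Lemma\,\ref{ACCP1}}) we get $[h]=[\bar h]$ with $h,\bar h\in\Delta(B)$, so by (3) $hB=\bar hB$, i.e. $\bar h=uh$; then $h\bar h = u h^2$, and $h\bar h\in A$ up to a unit, so $h^2$ is, up to a unit, in $A$ — combined with $\gcd(h,\bar h)=1$ this forces $h\in B^*$ (any prime dividing $h$ would divide $\bar h=uh$, contradiction), hence $[f]=[h]=1$.

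For $(4)\Rightarrow(5)$: I would show directly that $\Pi(A)$ being square-torsion free forces it torsion free, using the defining relation $[f]^{-1}=[\bar f]$. Suppose $[f]^n=1$ with $n$ minimal $>1$; I want to derive a square-torsion element. Writing $f\in\Delta(B)$ (polar factorization), $f^n=\alpha\cdot(\text{unit})$ for some $\alpha\in A'$, and since $B$ is a UFD I can look at prime factorizations: the orbit structure of primes under conjugation lets me peel off a conjugation-symmetric or anti-symmetric sub-factor whose class squares to $1$ but is nontrivial — essentially, the prime divisors of $f$ either come in conjugate pairs (contributing to $A$, hence trivial class) or are self-conjugate up to units, and a self-conjugate prime $p$ has $[p]^2=[p\bar p]=[\text{unit}\cdot p^2]=\dots$; a careful bookkeeping shows nontrivial $n$-torsion produces nontrivial $2$-torsion. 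Then $(5)\Rightarrow(6)$: by \emph{Section 5}'s preliminaries, $\Pi(A)=\langle\mathcal{T}\rangle$ for a transversal generating set $\mathcal{T}=[\,\TT\,]$; once we know $\Pi(A)$ is torsion free, I claim $\mathcal{T}$ is actually a \emph{basis}: the generating property is already established, and $\Z$-linear independence follows because a nontrivial relation $\prod[h_i]^{e_i}=1$ among distinct transversal elements would, after clearing to $\Delta(B)$-representatives and using the UFD, equate two products of pairwise non-associate irreducibles up to conjugation — forcing, via the prime factorization in $B$, some $[h_i]$ to be torsion, contradiction. This exhibits $\Pi(A)$ as free on $\mathcal{T}$, which is exactly the final assertion. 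Finally $(6)\Rightarrow(1)$: freeness of $\Pi(A)$ (in particular torsion-freeness) is used to run the arguments of $(3)$ and $(2)$ in reverse and recover uniqueness of polar factorization up to units, closing the cycle.

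The main obstacle I anticipate is the step $(4)\Rightarrow(5)$ — upgrading square-torsion-freeness to full torsion-freeness — because it is the only implication where one cannot simply track a single prime but must understand how the $\Z_2$-action (conjugation) partitions the primes of $B$ into free orbits versus fixed points, and argue that any $p$-torsion in $\Pi(A)$ for odd $p$ is impossible while $2$-torsion is exactly what (4) rules out. A clean way to organize this: use that $B$ a UFD gives $L^*=B^*\times\mathcal{F}(P)$ for a set $P$ of representative primes, that conjugation acts on $P$ (up to units) by an involution $\iota$, that $B^*K^*$ corresponds to the subgroup generated by $B^*$, by $\{p\cdot\iota(p)\}$ and by the $\iota$-fixed primes (which lie in $A$ up to units), so that $\Pi(A)\cong\bigoplus_{\{p,\iota(p)\},\,p\neq\iota(p)}\Z$ — manifestly free — and the torsion-free/square-torsion-free/free conditions all collapse to the single honest condition that makes this direct-sum description valid, namely that no $\iota$-fixed prime $p$ has $p$ itself (rather than $p^2$) lying in $A\cdot B^*$, equivalently $\bar p = up$ with $u$ not a square in $B^*$ in a way that obstructs descent. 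Making the bookkeeping of units precise there is the delicate part; everything else is formal manipulation with $\gcd$'s in the UFD $B$.
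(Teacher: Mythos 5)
Your plan has genuine gaps, and they all cluster around the one hard phenomenon in this theorem: primes $p$ with $pB=\bar pB$. In your step $(1)\Rightarrow(2)$ you argue that if $f\in\Delta(B)$ and $d=\gcd(f,\bar f)\notin B^*$ then some real non-unit divides $f$; but from $p\mid f$ and $p\mid\bar f$ you only get the real divisor $p\bar p\mid f$ when $p$ and $\bar p$ are non-associate. When $\bar p=up$ with $u\in B^*$, neither does $p\bar p$ divide $f$, nor need $p$ have any associate lying in $A$: in $B=\C[t,t^{-1}]$ with $\bar t=t^{-1}$ and $A=\R[\Sp^1]$, the element $f=t-z$ with $\| z\|=1$ lies in $\Delta(B)$ and is associate to its own conjugate, so ``$f\in\Delta(B)$ and $\gcd(f,\bar f)\ne 1$'' is not by itself contradictory. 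You acknowledge the difficulty but never show where hypothesis (1) actually enters to exclude this case, so the implication is unproved. The paper avoids the problem entirely by proving $(1)\Rightarrow(3)$ instead, via a two-line trick: $[f]=[g]$ gives $f\bar g=\sigma a$, hence $(g\bar g)f=(a\sigma)g$, two polar factorizations of one element, so (1) forces $fB=gB$.

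There is also a structural failure of your cycle. Your step labelled $(3)\Rightarrow(4)$ uses $\gcd(h,\bar h)=1$ for $h\in\Delta(B)$, which is precisely the nontrivial half of (2); and your $(6)\Rightarrow(1)$ is described as ``running the arguments of (2) and (3) in reverse,'' i.e.\ it presupposes (2) and (3) when only (6) is available. Thus in your implication graph the only exits from (3) and from (6) lean on conditions not yet granted, and the six statements are never shown equivalent (in particular, (3) is not shown to imply anything). The steps $(4)\Rightarrow(5)$ and $(5)\Rightarrow(6)$ are only sketched, and your closing ``clean organization'' states the decisive condition backwards: the description $\Pi(A)\cong\bigoplus\Z$ indexed by conjugate pairs of primes is valid precisely when every prime $p$ with $pB=\bar pB$ \emph{does} have a real associate (lies in $B^*A'$), not when no such prime does. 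That valuation-theoretic bookkeeping can in fact be made into a correct direct proof of $(4)\Rightarrow(6)$, which would be a genuinely different route from the paper; but as written it is not carried out. For comparison, the paper proves $(4)\Rightarrow(3)$ and $(3)\Rightarrow(5)$ by tracking a single prime dividing the real factor $a$ in $f\bar g=a\omega$, then $(3)\Rightarrow(2)$, $(2)\Rightarrow(4)$, $(2)$--$(5)\Rightarrow(1)$, $(1)\Rightarrow(3)$, and gets (6) last by establishing property $(\dag)$ that a product of distinct transversal elements remains in $\Delta(B)$, which is exactly what makes the transversal generating set a basis; your proposal asserts that basis property but proves no analogue of $(\dag)$.
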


\begin{proof} The implications (6) implies (5) implies (4) are valid by definition.

Assume that condition (4) holds, and let $f,g\in\Delta (B)$ be such that $[f]=[g]$. Write $f\bar{g}=a\omega$ for $a\in A'$ and $\omega\in B^*$. Suppose that $p\in B$ is prime and 
$a\in pB$. If $pB=\bar{p}B$, then $[p]=[\bar{p}]=[p]^{-1}$ implies $[p]=1$. 
So $pB=rB$ for some $r\in A'$. But then $f\bar{g}\in rB$ implies $f\in rB$ or $g\in rB$, contradicting the assumption that $f,g\in\Delta (B)$. Therefore, 
$pB\ne\bar{p}B$. It follows that $a\in pB\cap\bar{p}B=p\bar{p}B$. Since $f,\bar{g}\not\in p\bar{p}B$, we may assume that $f\in pB$ and $\bar{g}\in \bar{p}B$, which implies that $f,g\in pB$. 

Write $a=p\bar{p}\alpha$ for $\alpha\in B'$, and write $f=pF$ and $g=pG$ for $F,G\in B'$. Then $\alpha\in K\cap B'=A'$. In addition, $f,g\in\Delta (B)$ implies $F,G\in\Delta (B)$. We have:
\[
f\bar{g}=(pF)(\bar{p}\bar{G})=p\bar{p}\alpha\omega \implies F\bar{G}=\alpha\omega \implies [F]=[G]
\]
By induction on the maximum number of prime factors of $f$ or $g$, we conclude that $FB=GB$. Therefore, $fB=gB$. This proves (4) implies (3). 

Assume that condition (3) holds, and suppose that $\gamma^n=1$ for some $\gamma\in\Pi (A)$ and $n\ge 1$. Let $f\in B$ be such that $[f]=\gamma$. Since $B$ is a UFD, $f$ admits a polar factorization $f=r h$, where $r\in A'$ and $h\in\Delta (B)$. Therefore, $[f]=[h]$ and $[h^n]=[f^n]=[1]$. By condition (3):
\[
h^nB=B \implies h\in B^*  \implies \gamma =[f]=[rh]=1
\]
This shows that (3) implies (5). Therefore, conditions (3), (4) and (5) are equivalent. 

Given $f\in\Delta (B)$, suppose that $\gcd (f,\bar{f})\ne 1$. Let $p\in B$ be a prime such that $f,\bar{f}\in pB$. Then $p\in\Delta (B)$. 
Since $M\cap\Delta (B)=B^*$, $p\not\in M$ and $[p]\ne 1$. Condition (3) implies $[p]^2\ne 1$, meaning that $pB\ne\bar{p}B$. 
But then $f\in pB\cap \bar{p}B=p\bar{p}B$, contradicting the fact that 
$f\in\Delta (B)$. We conclude that $\gcd (f,\bar{f})=1$. This shows that (3) implies (2). 

Assume that condition (2) holds. Suppose that $\gamma^2=1$ for $\gamma\in\Pi (A)$, and write $\gamma=[f]$ for $f\in\Delta (B)$. Then $f^2\in M$, so there exist $a\in A'$ and $\omega\in B^*$ with $f^2=a\omega$. If $a\not\in A^*$, then there exists a prime $p\in B$ with $a\in pB$. Since $\bar{f}^2=a\bar {\omega}$, we see that $f^2,\bar{f}^2\in pB$, which implies 
$f,\bar{f}\in pB$. However, (2) implies $\gcd (f,\bar{f})=1$, since $f\in \Delta (B)$. Therefore, $a\in A^*$, and $f^2\in B^*$ gives $f\in B^*$ and $[f]=1$. This shows that (2) implies (4). Therefore, conditions (2)-(5) are equivalent.

Assume that conditions (2)-(5) hold. Since $B$ is a UFD, it satisfies the ACCP. By {\it Lemma\,\ref{ACCP1}}, every element of $B'$ admits a polar factorization. 
Let $\alpha f=\beta g$ for $\alpha ,\beta\in A$ and $f,g\in\Delta (B)$. Then $[f]=[g]$, and condition (3) implies $fB=gB$. Write $g=\omega f$ for $\omega\in B^*$. Then $\alpha f=\beta\omega f$ , so $\omega\in K\cap B^*=A^*$ and $\alpha A=\beta B$. Therefore, conditions (2)-(5) imply (1). 

Conversely, assume that condition (1) holds, and that $[f]=[g]$ for $f,g\in\Delta (B)$. Then $f\bar{g}=\sigma a$ for $\sigma\in B^*$ and $a\in A'$, which implies 
$(g\bar{g})f=(a\sigma )g$. Since these are polar factorizations, condition (1) implies $fB=gB$, and condition (3) holds. Therefore, conditions (1)-(5) are equivalent.

Assume that conditions (1)-(5) hold. Let $\mathcal{T}\subset\Delta (B)_1$ be a transversal of the $\Gamma$-action. Suppose that $f_1,...,f_n\in\mathcal{T}$ are distinct ($n\ge 0$), and let 
$e_1,...,e_n\in\N$ be nonzero. Then:
\begin{equation*}\label{primitive}
 (\dag ) \hspace{.5in} f:=f_1^{e_1}\cdots f_n^{e_n}\in\Delta (B) 
\end{equation*}
In order to see this, let $r\in A'$ be such that $f\in rB$. If $r$ is a non-unit, let $p\in B$ be a prime with $r\in pB$. 
Since each $f_j$ is irreducible, we see that $pB=f_jB$ for some $j$. Therefore, condition (3) implies $[p]^2=[f_j]^2\ne 1$. 
However, $r\in\bar{p}B$ as well, so $r\in pB\cap\bar{p}B$. If $pB=\bar{p}B$, then $[p]^2=1$ (as above), which is not the case. Therefore, $\bar{p}B=f_rB$ for some $r\ne j$. But then 
$[f_r]=[f_j]^{-1}$, which means that $f_r$ is in the orbit of $f_j$, a contradiction. Therefore, $r\in A^*$, and $f\in \Delta (B)$, as claimed. 

Now suppose that $[f_1]^{e_1}\cdots [f_s]^{e_s}[f_{s+1}]^{-e_{s+1}}\cdots [f_n]^{-e_n}=1$ for distinct elements $f_j\in\mathcal{T}$, $n\ge 0$, and nonzero $e_j\in\N$. 
Then:
\[
[f_1]^{e_1}\cdots [f_s]^{e_s}[\bar{f}_{s+1}]^{e_{s+1}}\cdots [\bar{f}_n]^{e_n}=1 
\]
If $g=f_1^{e_1}\cdots f_s^{e_s}\bar{f}_{s+1}^{e_{s+1}}\cdots \bar{f}_n^{e_n}$, then $[g]=1$, and property ($\dag$) implies $g\in\Delta (B)$. By condition (3), it follows that $gB=B$. 
Therefore, $g\in B^*$ implies $f_j\in B^*$ for each $j$. This is only possible if $n=0$, since $B^*\cap\mathcal{T}=\emptyset$. 

Therefore, there are no relations among elements of $[\mathcal{T}]$. In addition, it was shown in {\it Sect.\,2} that $[\mathcal{T}]$ generates $\Pi (A)$. 
Therefore, $\Pi (A)$ is a free abelian group with basis 
$[\mathcal{T}]$. 

This shows that conditions (1)-(5) imply (6), and completes the proof of the theorem. 
\end{proof} 

%%%%%%%%%

\subsection{The Case $A$ is a UFD}

\begin{lemma}\label{delta} Assume that $A$ is a UFD and let $f\in B'$ be given. Write $f=f_1+if_2$ for $f_1,f_2\in A$. Then $f\in\Delta (B)$ if and only if $\gcd (f_1,f_2)=1$.
\end{lemma}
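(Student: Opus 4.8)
The plan is to prove both directions by relating a real divisor of $f$ to a common divisor of $f_1$ and $f_2$ in the UFD $A$. The key observation is that if $r \in A'$ divides $f = f_1 + if_2$ in $B = A \oplus iA$, then writing $f = r(g_1 + ig_2)$ with $g_1, g_2 \in A$ (using $B = A \oplus iA$ and that $r \in A$) forces $f_1 = rg_1$ and $f_2 = rg_2$, so $r$ divides both $f_1$ and $f_2$ in $A$. Conversely, any common divisor $r \in A'$ of $f_1$ and $f_2$ in $A$ obviously divides $f$ in $B$. Thus the set of real divisors of $f$ (up to $A^*$) coincides with the set of common divisors of $f_1, f_2$ in $A$.

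For the forward direction, I would argue contrapositively: if $d = \gcd(f_1, f_2) \ne 1$ in the UFD $A$, then $d$ is a non-unit of $A'$ dividing $f$ in $B$ by the remark above, so $f \notin \Delta(B)$. For the reverse direction, suppose $\gcd(f_1, f_2) = 1$ and let $r \in A'$ with $f \in rB$; by the observation above, $r$ divides both $f_1$ and $f_2$ in $A$, hence $r$ divides $\gcd(f_1, f_2) = 1$, so $r \in A^*$. This is exactly the condition that $f \in \Delta(B)$.

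The one point requiring a little care — and the main (mild) obstacle — is justifying that $r \in A$ dividing $f$ in $B$ yields quotient coordinates $g_1, g_2$ back in $A$ rather than merely in $K$. This follows from $B = A \oplus iA$: writing $f = r h$ with $h \in B$, decompose $h = g_1 + ig_2$ with $g_1, g_2 \in A$, and then $f = rg_1 + i(rg_2)$ with $rg_1, rg_2 \in A$; uniqueness of the decomposition $B = A \oplus iA$ gives $f_1 = rg_1$ and $f_2 = rg_2$ in $A$. No appeal to the fraction fields is needed. After that, everything is immediate from the definition of $\Delta(B)$ and the fact that $A$ is a UFD (so $\gcd$ is defined and "divides the gcd" is equivalent to "is a common divisor").
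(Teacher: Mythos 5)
Your proof is correct and follows essentially the same route as the paper's: both directions reduce a real divisor $r\in A'$ of $f$ to a common divisor of $f_1$ and $f_2$ via the decomposition $B=A\oplus iA$, and then invoke $\gcd(f_1,f_2)=1$ (the paper leaves the coordinatewise divisibility step implicit, which you spell out). No gaps.
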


\begin{proof} 
If $f_1,f_2\in rA$ for $r\in A'$, $r\not\in A^*$, then $f\in rB$ implies $f\not\in\Delta (B)$. 
Therefore, $f\in\Delta (B)$ implies $\gcd (f_1,f_2)=1$. 

Conversely, assume that $\gcd (f_1,f_2)=1$, and let $f\in rB$ for $r\in A'$. If $f=rg$ for $g\in B'$, then $f_1,f_2\in rA$ implies $rA=A$. 
Therefore, $f\in \Delta (B)$. 
\end{proof}

\begin{lemma}\label{A=UFD1} Assume that $A$ is a UFD. If $f\in\Delta (B)$, then $f^n\in\Delta (B)$ for each integer $n\ge 0$. 
\end{lemma}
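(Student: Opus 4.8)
The plan is to reduce to the characterization of $\Delta(B)$ given by Lemma \ref{delta}, which applies since $A$ is a UFD. Write $f = f_1 + i f_2$ with $f_1, f_2 \in A$; by hypothesis $f \in \Delta(B)$, so Lemma \ref{delta} gives $\gcd(f_1, f_2) = 1$ in $A$. I want to show $\gcd((f^n)_1, (f^n)_2) = 1$, where $f^n = (f^n)_1 + i(f^n)_2$ with $(f^n)_1, (f^n)_2 \in A$, and then invoke Lemma \ref{delta} again in the reverse direction to conclude $f^n \in \Delta(B)$. The case $n = 0$ is immediate since $1 \in B^* \subset \Delta(B)$, and $n = 1$ is the hypothesis, so assume $n \geq 2$.

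The key step is to identify the obstruction to $\gcd((f^n)_1, (f^n)_2) = 1$. Suppose $p \in A$ is a prime dividing both $(f^n)_1$ and $(f^n)_2$; equivalently, $p \mid f^n$ in $B$ (here I use that $A$ is a UFD, so $pB \cap A = pA$, and that $p$ remains relevant in $B$ via the $A$-module decomposition $B = A \oplus iA$: an element $g = g_1 + ig_2$ lies in $pB$ iff $p \mid g_1$ and $p \mid g_2$ in $A$). So it suffices to show: if $p$ is a prime of $A$ and $p \mid f^n$ in $B$, then $p \mid f$ in $B$, because that would force $p \mid f_1$ and $p \mid f_2$, contradicting $\gcd(f_1,f_2)=1$. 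Now $f^n \in pB$; I would like to push this down along the conjugation. Since $\overline{f^n} = \bar{f}^n$ and $\overline{pB} = pB$ (as $p \in A$), we also get $\bar{f}^n \in pB$, hence $(f\bar f)^n \in pB$. But $f\bar f = f_1^2 + f_2^2 \in A'$, so $(f\bar f)^n \in pA$, and since $pA$ is prime in $A$, $p \mid f\bar f$ in $A$.

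At this point the argument splits according to how $p$ behaves in $B$. The ring $B = A[i] = A[x]/(x^2+1)$ is an integral domain, and $B/pB = (A/pA)[x]/(x^2+1)$. If $x^2+1$ is irreducible modulo $p$, then $pB$ is prime in $B$, and $p \mid f\bar f$ in $B$ forces $p \mid f$ or $p \mid \bar f$ in $B$; in the first case we are done, and in the second, $p \mid \bar f$ implies $p \mid f_1$ and $p \mid f_2$ (same condition), again contradicting $\gcd(f_1,f_2)=1$ unless — wait, this already contradicts the hypothesis directly, so in fact the case $x^2+1$ irreducible mod $p$ cannot even arise with $p \mid f\bar f$. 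If instead $x^2+1$ splits or is a square modulo $p$, then $pB = \mathfrak{q}_1 \mathfrak{q}_2$ (possibly $\mathfrak{q}_1 = \mathfrak{q}_2$) for primes of $B$ with $\overline{\mathfrak{q}_1} = \mathfrak{q}_2$; from $f^n \in pB \subset \mathfrak{q}_i$ we get $f \in \mathfrak{q}_i$ for both $i$, whence $f \in \mathfrak{q}_1 \mathfrak{q}_2 = pB$, which is the desired conclusion $p \mid f$ in $B$ — again contradicting $\gcd(f_1,f_2)=1$. The upshot is that no such prime $p$ exists, so $\gcd((f^n)_1, (f^n)_2) = 1$ and $f^n \in \Delta(B)$ by Lemma \ref{delta}.

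The main obstacle is handling the factorization of $p$ in $B$ cleanly: $B$ need not be a UFD even when $A$ is, so I must argue at the level of the quotient $B/pB$ and the three cases for $x^2 + 1 \bmod p$ rather than invoking unique factorization in $B$. A slicker alternative that avoids casework: show directly that $\gcd(f, \bar f)$ in $B$ contains no real prime divisor, so that if $p \in A$ divides $f^n$ in $B$ then (pulling out the real content) $p$ would divide $f \bar f$ to the $n$-th power and hence $p^{\lceil n/?\rceil}$-type divisibility would propagate down to $f$; but I expect the quotient-ring argument above to be the most transparent. Either way, the content is entirely in Lemma \ref{delta} plus the behavior of $A$-primes under the degree-two extension $B = A[i]$.
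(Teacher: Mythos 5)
Your reduction is the right one -- everything hinges on showing that a prime $p\in A$ with $f^n\in pB$ forces $f_1,f_2\in pA$, contradicting Lemma \ref{delta} -- but the case analysis on $x^2+1$ modulo $p$ has genuine gaps, all stemming from treating $A/pA$ as if it were a field. Since $p$ need not be maximal, $A/pA$ is only a domain, and irreducibility of $x^2+1$ in $(A/pA)[x]$ does not imply that $pB$ is prime: for $A=\R[u,v]$ and $p=u^2+v^2$, the polynomial $x^2+1$ has no root in $A/pA$, yet $pB=(u+iv)(u-iv)B$ is not prime; the correct dichotomy is whether $-1$ is a square in ${\rm frac}(A/pA)$. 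Worse, in your second case the assertion that $pB=\mathfrak{q}_1\mathfrak{q}_2$ with $\mathfrak{q}_i$ prime in $B$ is a Dedekind-type statement that is not available for free: $B$ need not be a UFD, and in a general noetherian domain a prime of $A$ need not factor this way. It does hold here, but only because $B$ is normal ($x^2+1$ has unit discriminant, so $B$ is \'etale over the normal ring $A$), or, more simply, because $S^{-1}B$ with $S=A\setminus pA$ is a rank-two \'etale extension of the DVR $A_{pA}$ and hence a semilocal PID -- none of which you establish. Even granting the factorization, $f\in\mathfrak{q}_1$ and $f\in\mathfrak{q}_2$ gives only $f\in\mathfrak{q}_1\cap\mathfrak{q}_2$, not $f\in\mathfrak{q}_1\mathfrak{q}_2$ (these coincide here, but again only after the local or divisorial argument), and in the parenthetical sub-case $\mathfrak{q}_1=\mathfrak{q}_2$ the deduction $f\in pB$ fails outright, since $f\in\mathfrak{q}$ does not give $f\in\mathfrak{q}^2$; that sub-case is in fact impossible because $2\in A^*$, but you neither exclude it nor handle it correctly. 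So as written the proof is incomplete; it can be repaired by localizing at $p$ and running your split/inert argument in the semilocal Dedekind ring $A_{pA}[i]$.

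For comparison, the paper obtains the key implication with no structure theory of $B/pB$ at all: if $f^n\in pB$, then the matrix form $N_f$ reduces to a nonzero nilpotent $2\times 2$ matrix over the domain $A/pA$, so its trace and determinant vanish there, i.e. $2f_1,\ f_1^2+f_2^2\in pA$, whence $f_1,f_2\in pA$, contradicting Lemma \ref{delta}. You may want to adopt that argument, or else carry out the localization repair sketched above.
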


\begin{proof} Let $F=N_f$ be the matrix form of $f=f_1+if_2$, and suppose that $F^n=pG$ for some integer $n\ge 1$, where $p\in A$ is prime and $G=N_g$ for $g\in B'$. 
Let $\pi :A\to A/pA$ be the standard surjection. 
Then $\pi (F)$ is a nonzero nilpotent matrix over the integral domain $A/pA$. Therefore:
\[
\det (\pi (F))= {\rm tr}(\pi (F))=0 \implies \det (F) , {\rm tr}(F)\in pA \implies f_1^2+f_2^2 , 2f_1\in pA \implies f_1,f_2\in pA
\]
But this is a contradiction, since {\it Lemma\,\ref{delta}} implies that $f_1$ and $f_2$ are relatively prime. Therefore, no such integer $n$ exists. It follows that, if $f^n=(f^n)_1+i(f^n)_2$ for $(f^n)_1,(f^n)_2\in A$, then  
$\gcd ((f^n)_1,(f^n)_2)=1$. {\it Lemma\,\ref{delta}} implies $f^n\in\Delta (B)$ for all $n\ge 1$.
\end{proof}

\begin{lemma}\label{UFD4} Assume that $A$ is a UFD. Given $f,g\in\Delta (B)$, let $fg=rh$ be a polar factorization. 
\begin{itemize}
\item [{\bf (a)}] $f\bar{f},g\bar{g}\in pA$ for every prime $p\in A$ dividing $r$. 
\item [{\bf (b)}] If $\gcd (f\bar{f},g\bar{g})=1$, then $fg\in\Delta (B)$. 
\end{itemize}
\end{lemma}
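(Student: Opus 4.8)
The plan is to prove part (a) directly by analyzing the matrix forms, then deduce part (b) as an immediate consequence. Let $F = N_f$ and $G = N_g$ be the matrix forms of $f = f_1 + if_2$ and $g = g_1 + ig_2$, and recall that the matrix form of the product $fg$ is $FG$, while the matrix form of $\bar f$ is $F^T$. Write $fg = rh$ with $r \in A'$ and $h \in \Delta(B)$, say $h = h_1 + ih_2$, so that $FG = r\,N_h$. Let $p \in A$ be prime with $p \mid r$; then every entry of $FG$ lies in $pA$, i.e. $\pi(F)\pi(G) = 0$ over the integral domain $A/pA$, where $\pi : A \to A/pA$ is the standard surjection.

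The key step is to extract from $\pi(F)\pi(G) = 0$ the conclusion that $\det(\pi(F)) = 0$ and $\det(\pi(G)) = 0$. Since $f, g \in \Delta(B)$, \emph{Lemma \ref{delta}} gives $\gcd(f_1,f_2) = 1$ and $\gcd(g_1,g_2)=1$, so neither $\pi(F)$ nor $\pi(G)$ is the zero matrix (otherwise $p$ would divide both $f_1, f_2$ or both $g_1,g_2$). Over the integral domain $A/pA$, a product of two nonzero $2\times 2$ matrices being zero forces each factor to be singular: if $\pi(F) \neq 0$ but $\det\pi(F)\neq 0$ then $\pi(F)$ is invertible over $\mathrm{frac}(A/pA)$, forcing $\pi(G) = 0$, a contradiction; symmetrically for $\pi(G)$. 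Hence $\det(F), \det(G) \in pA$, that is $f_1^2 + f_2^2 \in pA$ and $g_1^2 + g_2^2 \in pA$. But $\det(N_f) = f\bar f$ and $\det(N_g) = g\bar g$ viewed in $A$, so $f\bar f, g\bar g \in pA$, proving (a).

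For part (b), suppose $\gcd(f\bar f, g\bar g) = 1$ and let $fg = rh$ be a polar factorization. If $r$ were a non-unit, pick a prime $p \in A$ dividing $r$; part (a) would give $f\bar f, g\bar g \in pA$, contradicting $\gcd(f\bar f, g\bar g)=1$. Hence $r \in A^*$, so $fg = rh$ with $h \in \Delta(B)$ shows $fg \in \Delta(B)$ (using that $\Delta(B)$ is closed under multiplication by units, as $B^* \subset \Delta(B)$ and property (2) of $\Delta(B)$ runs the other way — more directly, $fg = rh$ with $r$ a unit means $fgB = hB$, and $h \in \Delta(B)$ is a property of the principal ideal).

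The main obstacle is the linear-algebra step over $A/pA$: one must be careful that $A/pA$ need not be a UFD or even contain enough units, so the argument should pass to $\mathrm{frac}(A/pA)$ and use only that it is a field, where the rank argument ($\pi(F)$ invertible $\Rightarrow \pi(G) = 0$) is clean. Everything else is bookkeeping with the identities $N_{fg} = N_f N_g$, $N_{\bar f} = N_f^T$, and $\det N_f = f\bar f$, all of which follow from the definition of the matrix form.
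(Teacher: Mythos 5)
Your proof is correct and takes essentially the same route as the paper's: both pass to the matrix forms $N_f,N_g$, use \emph{Lemma\,\ref{delta}} to see that neither matrix vanishes modulo a prime $p$ dividing $r$, and conclude that $\det N_f=f\bar f$ and $\det N_g=g\bar g$ lie in $pA$, with part (b) following immediately. Your explicit invertibility argument over $\mathrm{frac}(A/pA)$ is just a slightly more detailed way of carrying out the determinant step the paper phrases via $\det(F)\det(G)=p^2\det(SH)$ and $G\neq pT$.
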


\begin{proof} Suppose that $p\in A$ is a prime dividing $r$. Let the matrix forms of $f,g,r,h$ be given by $F,G,R,H$, respectively, and let $S\in M_2(A)$ be such that $R=pS$. 
Then $FG=pSH$ implies $\det (F)\det (G)=p^2\det (SH)$. Since $g\in\Delta (B)$, {\it Lemma\,\ref{delta}} implies $G\ne pT$ for any $T\in M_2(A)$. Therefore, $\det (F)\in pA$, and by symmetry, $\det (G)\in pA$. This proves part (a), and part (b) is directly implied by part (a). 
\end{proof}

\begin{theorem}\label{A=UFD3} If $A$ is a UFD, then $\Pi (A)$ is torsion free.
\end{theorem}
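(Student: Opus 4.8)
The plan is to show that every element of $\Pi(A)$ has infinite order (unless trivial), using the multiplicativity of the matrix form of elements of $\Delta(B)$. Let $\gamma\in\Pi(A)$ satisfy $\gamma^n=1$ for some $n\ge 1$. Since $A$ is a UFD, it satisfies the ACCP, so by \emph{Lemma\,\ref{ACCP1}} we may write $\gamma=[h]$ with $h\in\Delta(B)$; replacing $\gamma$ by a power of $\gamma$ does not change the setup, so we may assume $\gamma^n=1$ with $n\ge 2$ minimal. First I would observe that $h^n\in\Delta(B)$ by \emph{Lemma\,\ref{A=UFD1}}, and that $[h^n]=\gamma^n=1$ means $h^n\in M=B^*A'$. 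Write $h^n=\omega a$ with $\omega\in B^*$ and $a\in A'$. The key point is to force $a\in A^*$, which will give $h^n\in B^*$, hence $h\in B^*$ and $\gamma=1$.

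The main step is the claim that $a\in A^*$. Suppose not; let $p\in A$ be a prime dividing $a$. Then $h^n\in pB$, and since $\overline{h^n}=\overline{\omega}\,a$ we also get $\overline{h}^n\in pB$; as $pB$ is... well, $p$ need not be prime in $B$, so here I would instead argue at the level of matrix forms, mimicking \emph{Lemma\,\ref{A=UFD1}}. Let $F=N_h$ and write $h^n = (h^n)_1 + i(h^n)_2$; since $h^n\in pB$ we have $(h^n)_1,(h^n)_2\in pA$, so $N_{h^n}=F^n$ is of the form $pT$ with $T\in M_2(A)$. Passing to $A/pA$, which is an integral domain, $\pi(F)^n=0$, so $\pi(F)$ is nilpotent, forcing $\det\pi(F)=\mathrm{tr}\,\pi(F)=0$, i.e. $h_1^2+h_2^2\in pA$ and $2h_1\in pA$, whence $h_1,h_2\in pA$. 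This contradicts \emph{Lemma\,\ref{delta}}, which says $\gcd(h_1,h_2)=1$ because $h\in\Delta(B)$. Therefore $a\in A^*$.

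With $a\in A^*$ we get $h^n=\omega a\in B^*$, so $h$ is a unit of $B$, and thus $\gamma=[h]=1$. Since every torsion element of $\Pi(A)$ is trivial, $\Pi(A)$ is torsion free.

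I expect the main obstacle to be precisely the transition from the ideal-theoretic statement ``$h^n\in pB$'' to a usable conclusion: $p$ is prime in $A$ but may well factor in $B$, so one cannot directly say $h\in pB$. The matrix-form argument circumvents this cleanly, since $F^n=pT$ is an honest equation over $A$ and nilpotency in $A/pA$ is robust to $A/pA$ merely being a domain rather than a field. Everything else — invoking ACCP via \emph{Lemma\,\ref{ACCP1}}, power-closure of $\Delta(B)$ via \emph{Lemma\,\ref{A=UFD1}}, and the gcd characterization via \emph{Lemma\,\ref{delta}} — is already in place, so the proof is essentially an assembly of those three lemmas plus the reduction ``$[h^n]=1 \Rightarrow h^n\in M$'' which is immediate from the definition of $\Pi(A)$.
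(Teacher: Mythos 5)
Your proof is correct and follows essentially the same route as the paper: polar factorization via \emph{Lemma\,\ref{ACCP1}}, $h^n\in\Delta(B)$ via \emph{Lemma\,\ref{A=UFD1}}, and then $h^n\in M\cap\Delta(B)=B^*$ forces $h\in B^*$ and $\gamma=1$. The only (harmless) redundancy is your matrix-form argument for $a\in A^*$: once $h^n\in\Delta(B)$ is known, the definition of $\Delta(B)$ applied to $h^n\in aB$ with $a\in A'$ gives $a\in A^*$ immediately, so that step simply re-proves the content of \emph{Lemma\,\ref{A=UFD1}} inline.
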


\begin{proof}
Let $f\in B'$ be such that $[f]^n=1$ for $n\ge 1$. By {\it Lemma\,\ref{ACCP1}}, $f$ admits a polar factorization: $f=rh$, $r\in A'$, $h\in \Delta (B)$. We have $[h^n]=1$, which implies $h^n\in M$. By {\it Lemma\,\ref{A=UFD1}}, we also have $h^n\in\Delta (B)$. Therefore:
\[
h^n\in M\cap\Delta (B)=B^* \implies h\in B^* \implies [f]=[h]=1
\]
\end{proof}

Combining {\it Thm.\,\ref{A=UFD3}} with {\it Thm.\,\ref{free}} gives the following. 

\begin{corollary}\label{A=UFD2} If $A$ is a UFD and $B$ is a UFD, then $\Pi (A)$ is a free abelian group, and a basis for $\Pi (A)$ is a transversal generating set. 
\end{corollary}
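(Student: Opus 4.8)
The plan is to derive Corollary~\ref{A=UFD2} as an immediate formal consequence of the two results just proved, with essentially no new work. The statement asserts that when $A$ and $B$ are both UFDs, $\Pi(A)$ is free abelian with a transversal generating set as a basis. Since $B$ is assumed to be a UFD, all six equivalent conditions of Theorem~\ref{free} are on the table: in particular, condition (5), that $\Pi(A)$ is torsion free, is equivalent to condition (6), that $\Pi(A)$ is free abelian with basis a transversal generating set. So the only thing to check is that one of conditions (1)--(5) actually holds under the extra hypothesis that $A$ is also a UFD.

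That is exactly what Theorem~\ref{A=UFD3} supplies: if $A$ is a UFD, then $\Pi(A)$ is torsion free, i.e., condition (5) of Theorem~\ref{free} holds. Therefore the chain of equivalences in Theorem~\ref{free} gives condition (6), and moreover the concluding sentence of Theorem~\ref{free} ("When these conditions hold, a basis for $\Pi(A)$ is given by a transversal generating set") yields the statement about the basis. So the proof is just: invoke Theorem~\ref{A=UFD3} to get torsion-freeness, then feed this into Theorem~\ref{free} to upgrade it to freeness together with the description of the basis.

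Concretely, I would write: By Theorem~\ref{A=UFD3}, since $A$ is a UFD, $\Pi(A)$ is torsion free, so condition (5) of Theorem~\ref{free} holds. Since $B$ is a UFD, Theorem~\ref{free} applies, and its equivalent conditions give that $\Pi(A)$ is a free abelian group (condition (6)) with a transversal generating set as a basis. This is a two-line deduction; there is no substantive obstacle, since both ingredients are already established in the excerpt. The only thing one must be slightly careful about is that Theorem~\ref{free} requires $B$ to be a UFD as a standing hypothesis for the equivalences to be usable — which is given here — while Theorem~\ref{A=UFD3} requires $A$ to be a UFD; both hypotheses are present in the corollary, so the two results compose cleanly.

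If one wanted to be fully self-contained one could instead note directly that under the hypotheses of the corollary, Lemma~\ref{delta}, Lemma~\ref{A=UFD1}, and Lemma~\ref{UFD4} let one verify, say, condition (2) of Theorem~\ref{free} by hand; but since Theorem~\ref{A=UFD3} already packages precisely condition (5), there is no reason to redo that analysis. The main (and only) "step" is recognizing that the corollary is the conjunction of the two preceding results, and I expect no difficulty at all in carrying it out.
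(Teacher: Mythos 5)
Your proposal is correct and matches the paper exactly: the paper states the corollary follows by combining Theorem~\ref{A=UFD3} (torsion-freeness from $A$ a UFD) with Theorem~\ref{free} (equivalence of torsion-free and free abelian with a transversal generating set as basis when $B$ is a UFD). Nothing further is needed.
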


In particular, this corollary implies that, if $B$ is a UFD and $B^*=\C^*$, then $\Pi (A)$ is a free abelian group. 

\begin{corollary}\label{circle} $\Sp^1$ is not factorial.
\end{corollary}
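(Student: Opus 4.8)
The plan is to argue by contradiction: if $A=\R[\Sp^1]$ were a UFD, then by \emph{Thm.\,\ref{A=UFD3}} the polar group $\Pi(A)$ would be torsion free, and I will contradict this by producing an element of order exactly $2$ in $\Pi(A)$.

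First I would set up the computation using \emph{Example\,\ref{Laurent}(2)}: the coordinate ring of $\Sp^1$ is $A=B^{\psi}$, where $B=\C[t,t^{-1}]$ and $\psi\in\mathrm{Conj}(B)$ is determined by $\psi(t)=t^{-1}$ together with complex conjugation on $\C$. Here $B$ is a UFD (it is a localization of $\C[t]$), its unit group is $B^{*}=\C^{*}\langle t\rangle$, and its primes are, up to units, the elements $t-c$ with $c\in\C^{*}$. Since $\overline{t-c}=\psi(t-c)=t^{-1}-\bar c$, conjugation sends the prime $(t-c)$ to the prime $(t-\bar c^{\,-1})$ up to associates, and the $\psi$-fixed primes are precisely those $t-c$ with $\|c\|=1$.

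Next I would fix $c\in\C$ with $\|c\|=1$ (for instance $c=i$) and put $p=t-c$. Using $\bar c=c^{-1}$ one gets $\bar p=t^{-1}-c^{-1}=-c^{-1}t^{-1}(t-c)=-c^{-1}t^{-1}p$, hence $p\bar p=-c^{-1}t^{-1}p^{2}$. Since $p\bar p$ is $\psi$-fixed it lies in $A'$, so $p^{2}=-ct\,(p\bar p)\in B^{*}A'=M$, giving $[p]^{2}=1$ in $\Pi(A)$. It remains to show $[p]\neq 1$, i.e. $p\notin M=B^{*}A'$. If $p=\lambda t^{n}a$ with $\lambda\in\C^{*}$, $n\in\Z$, $a\in A'$, then applying $\psi$ gives $\bar p=\bar\lambda\,t^{-n}a$, while $\bar p=-c^{-1}t^{-1}p=-c^{-1}\lambda\,t^{\,n-1}a$; cancelling the nonzero $a$ in the domain $B$ forces $t^{2n-1}$ to equal a nonzero constant, which is impossible in $\C[t,t^{-1}]$. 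Therefore $[p]$ has order exactly $2$, so $\Pi(A)$ has torsion; by the contrapositive of \emph{Thm.\,\ref{A=UFD3}}, $A$ is not a UFD, i.e. $\Sp^1$ is not factorial.

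The main obstacle is the last step of the previous paragraph: showing that the $\psi$-fixed prime $t-c$ is genuinely nontrivial in $\Pi(A)$, i.e. is not a unit times an element of $A$. This is where one must have the exact description of $B^{*}$ and of the $\psi$-action from \emph{Example\,\ref{Laurent}} and run the small "degree in $t$" argument. The verification that $[p]^{2}=1$ and the identification of the coordinate ring of $\Sp^1$ with $B^{\psi}$ are routine. (One could alternatively phrase the conclusion through \emph{Cor.\,\ref{A=UFD2}}, noting that a UFD $A$ with $B$ also a UFD would force $\Pi(A)$ free abelian, hence torsion free, but invoking \emph{Thm.\,\ref{A=UFD3}} directly is the shortest route.)
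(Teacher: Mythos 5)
Your proof is correct and follows essentially the same route as the paper: identify $\R[\Sp^1]$ with the $\psi$-fixed ring of $B=\C[t,t^{-1}]$, show $[t-c]^2=1$ for $\|c\|=1$ via $\bar{t}=t^{-1}$, and conclude from Theorem~\ref{A=UFD3} that $A$ is not a UFD. The only difference is that you explicitly verify $[t-c]\neq 1$ using $B^*=\C^*\langle t\rangle$, a nontriviality check the paper leaves implicit, which is a welcome addition rather than a deviation.
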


\begin{proof} Let $A=\R [x,y]$ with $x^2+y^2=1$, and let $B=\C [t,t^{-1}]$, where $t=x+iy$. Then $\bar{t}=t^{-1}$. Given $z\in\C$ with $\| z\| =1$, we have:
\[
[t-z]^{-1}=[\bar{t}-\bar{z}]=[t^{-1}-z^{-1}]=[-z^{-1}t^{-1}(t-z)]=[t-z]
\]
Therefore, $[t-z]^2=1$ and $\Pi (A)$ has torsion. By {\it Thm.\,\ref{A=UFD3}}, $\Sp^1$ is not factorial.
\end{proof}

\begin{corollary} There is no dominant morphism $\R^*\to\Sp^1$. Equivalently, $\Sp^1$ cannot be parametrized by Laurent polynomials $\R [t,t^{-1}]$. 
\end{corollary}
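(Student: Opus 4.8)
The plan is to argue by contradiction using the polar group as an obstruction, in the spirit of the proof of \emph{Cor.\,\ref{circle}}. Suppose there is a dominant morphism $\varphi:\R^*\to\Sp^1$. On coordinate rings this gives an injective $\R$-algebra homomorphism $A\hookrightarrow C$, where $A=\R[x,y]$ with $x^2+y^2=1$ is the coordinate ring of $\Sp^1$ and $C=\R[s,s^{-1}]$ is the coordinate ring of $\R^*$. Complexifying, we get $B=\C\otimes_\R A=\C[t,t^{-1}]$ with $t=x+iy$, $\bar t=t^{-1}$, embedded in $D=\C\otimes_\R C=\C[s,s^{-1}]$, compatibly with conjugation. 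The key point is that $D$ is a UFD with $D^*=\C^*\times s^{\Z}$, and $C=\R[s,s^{-1}]$ is a UFD; hence by \emph{Cor.\,\ref{A=UFD2}}, $\Pi(\R^*)=\Pi(C)$ is a free abelian group, in particular torsion free. On the other hand, by the computation in \emph{Cor.\,\ref{circle}}, $\Pi(\Sp^1)=\Pi(A)$ has nontrivial torsion: for any $z\in\C$ with $\|z\|=1$, the class $[t-z]_A$ satisfies $[t-z]_A^2=1$ while $[t-z]_A\ne 1$.

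The bridge between these two facts is \emph{Prop.\,\ref{subalgebra}}. We would like to regard $A$ as a subalgebra of $C$ via $\varphi^*$; then the induced map $\varphi:\Pi(A)\to\Pi(C)$, $[f]_A\mapsto[f]_C$, is a well-defined group homomorphism by part (a). If we can show this homomorphism is injective on the torsion element $[t-z]_A$ — or more simply that it is injective — we land a nontrivial square-torsion element of $\Pi(C)$, contradicting torsion-freeness. For injectivity we invoke part (c): it suffices to check that $T:=\C\otimes_\R A=\C[t,t^{-1}]=B$ is factorially closed in $D=\C[s,s^{-1}]$, i.e. that whenever a product of two nonzero elements of $D$ lies in $B$, each factor lies in $B$. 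Here $B$ is the subring of $D$ generated by the image of $t$; concretely, $\varphi^*(t)=u(s)$ is a unit times a polynomial in $s$ and $s^{-1}$, and since $\varphi$ is dominant $u$ is non-constant, so $B=\C[u,u^{-1}]$ sits inside $D=\C[s,s^{-1}]$ as a Laurent-polynomial subring in a single non-constant element. One then checks that such a subring is factorially closed: if $ab\in\C[u,u^{-1}]$ with $a,b\in D'$, compare degrees in $s$ (both the top and bottom $s$-degrees of $ab$ are multiples of $\deg_s u$, forcing the same for $a$ and $b$), and use that $\C[u^{\pm 1}]$ is integrally closed in $D$. I expect this factorial-closedness verification to be the main obstacle; it is elementary but requires care about the two possible cases for the automorphism-type of the embedding (whether $u$ or $u^{-1}$ is a genuine polynomial, and the behavior at $0$ and $\infty$).

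Once injectivity of $\varphi:\Pi(A)\hookrightarrow\Pi(C)$ is established, the contradiction is immediate: $[t-z]_A$ is a nontrivial element of order $2$ in $\Pi(A)$ by \emph{Cor.\,\ref{circle}}, so its image is a nontrivial element of order $2$ in $\Pi(C)$, yet $\Pi(C)$ is free abelian by \emph{Cor.\,\ref{A=UFD2}} (since both $C$ and $D=\C[s,s^{-1}]$ are UFDs), hence torsion free. This contradiction shows no dominant morphism $\R^*\to\Sp^1$ exists. The equivalent phrasing is just the geometric translation: a parametrization of $\Sp^1$ by Laurent polynomials in $\R[s,s^{-1}]$ is precisely an $\R$-algebra map $A\to\R[s,s^{-1}]$, and dominance corresponds to this map being injective (the image not lying in a proper subvariety), since $\Sp^1$ is an irreducible curve. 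Finally, I should double-check one degenerate possibility that the factorial-closedness argument must rule out: that $\varphi^*$ could fail to be injective, i.e. that the morphism is constant; but a constant morphism is not dominant, so this case does not arise, and in the dominant case $\varphi^*(t)$ is genuinely non-constant as used above.
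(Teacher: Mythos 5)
Your reduction of the statement to ``torsion in $\Pi(\Sp^1)$ versus torsion-freeness of $\Pi(\R^*)$'' is exactly the right obstruction, and it is the paper's strategy as well; but the step you yourself flag as the main obstacle is a genuine gap, because the claim is false, not merely delicate. Since a unital ring homomorphism carries units to units, the image $u$ of $t=x+iy$ in $D=\C[s,s^{-1}]$ is a unit, so $u=cs^k$ with $c\in\C^*$, $k\in\Z$, and the image of $B=\C[t,t^{-1}]$ is $T=\C[s^k,s^{-k}]$. For $|k|\ge 2$ this subring is \emph{not} factorially closed in $D$: for instance $s\cdot s^{k-1}=s^k\in T$ while $s\notin T$. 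Your degree-comparison sketch cannot repair this --- knowing that the top and bottom $s$-degrees of a product are multiples of $|k|$ says nothing about the individual factors (the same example $s\cdot s^{k-1}$ shows this), and integral closedness is irrelevant here. So Prop.\,\ref{subalgebra}(c) is not available, injectivity of $\Pi(A)\to\Pi(C)$ is never established, and the final contradiction does not go through as written.

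The missing idea is that injectivity is not needed: torsion-freeness of the target already does all the work, in the opposite direction from the one you use. This is how the paper argues (it embeds the circle's ring as a subalgebra $R$ of $\R[t,t^{-1}]$ and normalizes so that $\C\otimes_{\R}R=\C[t^{\pm m}]$): since $\Pi(\R[t,t^{-1}])$ is torsion free by Thm.\,\ref{A=UFD3} and $[s-\zeta]_R$ has order dividing $2$ for $\|\zeta\|=1$ by the computation in Cor.\,\ref{circle}, the image class $[t^m-\zeta]$ must be \emph{trivial}; and triviality is itself absurd, because $[t^m-\zeta]=1$ means $\omega(t^m-\zeta)\in\R[t,t^{-1}]$ for some $\omega\in\C^*$, whence $\omega(t^m-\zeta)\in\R[t,t^{-1}]\cap\C[t]=\R[t]$ forces $\omega\in\R^*$ and $\zeta\in\R$, impossible for $\zeta$ on the unit circle with $\zeta\ne\pm1$. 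In your orientation the same computation works: the homomorphism of Prop.\,\ref{subalgebra}(a) sends the square-torsion class $[t-\zeta]_A$ to $[cs^k-\zeta]_C$, which must be trivial, and triviality forces $\omega c,\ \omega\zeta\in\R$ for some $\omega\in\C^*$ (using $k\ne 0$; $k=0$ would make $x,y$ map to constants, contradicting dominance), i.e.\ $\zeta\in\R c$, which fails for a suitable choice of $\zeta$ with $\|\zeta\|=1$. Replacing your injectivity step by this ``the image must be trivial, but triviality is absurd'' argument yields a correct proof.
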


\begin{proof} Let $A=\R [t,t^{-1}]$. If there exists a dominant morphism $\R^*\to\Sp^1$, then there is a subalgebra $R\subset A$ with $R\cong \R [\Sp^1]$. 
If $T=\C\otimes_{\R}R$ and $B=\C\otimes_{\R}A$, then $B=\C [t,t^{-1}]$ and $T=\C [s,s^{-1}]$ for some $s\in B$. We my assume that $s=t^m$ for some positive $m\in\Z$. 

Consider the group homomorphism $\varphi :\Pi (R)\to \Pi (A)$. Since $A$ is a UFD, $\Pi (A)$ is torsion free. The proof of {\it Cor.\,\ref{circle}} shows $[s-\zeta]_R$ has order 2 for each $\zeta\in\C$ with $\|\zeta\|=1$. It follows that: 
\[
\varphi \left( [s-\zeta ]_R\right)=[t^m-\zeta]_A=1 \quad \forall \zeta\in\C\,\, ,\,\, \|\zeta\|=1
\]
Choose $\zeta\not\in \{ \pm 1\}$. Then $[t^m-\zeta ]_A=1$ implies $\omega (t^m-\zeta)\in A'$ for some $\omega\in\C^*$. But then
\[
\omega (t^m-\zeta )\in A\cap\C [t]=\R [t] \implies \omega\in\R^* \implies\zeta\in\R^*
\]
a contradiction. 
\end{proof}

%%%%%%%%%%%%%%%%%%%%%%%%%%%%%%%%%%%%%%%%%%%%%%%%%%%%%%%%%%%%%%%%%%%%%%%%%%%%%%

\section{Computations}

\subsection{Two Real Forms of $\PP^1_{\R}$}\label{projective-line}
The two real forms of $\PP^1_{\C}$ are $\PP^1_{\R}$ and $X_{\R}$, which is the conic in $\PP^2_{\R}$ defined by $x_0^2+x_1^2+x_2^2=0$;
see {\it Example\,\ref{proj-line}}. It was shown in {\it Example\,\ref{proj-line2}} that $\Pi (\PP^1_{\R})\cong\mathcal{F}(\HH\setminus \{ i\})$. 

The homogeneous coordinate ring of $X_{\R}$ is $A=\R [x_0,x_1,x_2]/(x_0^2+x_1^2+x_2^2)$, and if $K={\rm frac}(A)$, then:
\[
K_0=\R \left(\frac{x_1}{x_0},\frac{x_2}{x_0}\right) \quad{\rm where}\quad \left(\frac{x_1}{x_0}\right)^2+\left(\frac{x_2}{x_0}\right)^2+1=0
\]
If $B=\C [x_0,x_1,x_2]/(x_0^2+x_1^2+x_2^2)$, then $B_0=\C$ and:
\[ 
L=\C (x_1+ix_2,x_0)\cong\C^{(2)} \quad {\rm and}\quad L_0=\C ((x_1+ix_2)/x_0)\cong\C^{(1)}
\]
Let $t=(x_1+ix_2)/x_0$. Then $t\bar{t}=-1$, which implies $\bar{t}=-t^{-1}$. From {\it Example\,\ref{example2}}, recall that $L_0=\C^*\times G$ where $G\cong\mathcal{F}(\C)$. 
Given $z\in \C^*$, let $\kappa =(t-z)(\bar{t}-\bar{z})\in K_0$. We have:
\[
\kappa =(t-z)(\bar{t}-\bar{z})=(t-z)(-t^{-1}-\bar{z})=-\bar{z}t^{-1}(t-z)(t+\bar{z}^{-1}) \implies (t-z)(t+\bar{z}^{-1}) = -\bar{z}^{-1}\kappa t
\]
Let $P: L_0^*\to\Pi (X_{\R})$ be the natural surjection, and let $H\subset L_0^*$ be the subgroup $H=\langle t-z\, |\, z\in\D\rangle$. 
Since $\| z\|\ge 1$ if and only if $\| -\bar{z}^{-1}\|\le 1$, it follows that $P(H)=\Pi (X_{\R})$. 
In addition, $z=-\bar{z}^{-1}$ if and only if $z\in\partial\D$, i.e., $\| z\|=1$. 
Let $U\subset\D$ be the interior of $\D$, and let $Z\subset\partial\D$ be the arc $Z=\{ e^{i\theta}\, |\, 0\le \theta <\pi\}$. 
Then $P$ is bijective on the subgroup $\langle t-z\, |\, z\in U\cup Z\rangle$, which by Dedekind's theorem is a free abelian group. 
By identifying antipodal points of $\partial\D$, we conclude that $\Pi (X_{\R})$ is the free abelian group generated by the real projective plane:
$\Pi (X_{\R})\cong\mathcal{F}(\PP^2_{\R})$. See {\it Fig.\,1}. 

\begin{figure}[b]
\begin{subfigure}[b]{0.4\textwidth}
\centering
\begin{tikzpicture}
\begin{scope}
\clip (-2,-2) rectangle (2,2);
\draw[fill=lightgray,lightgray] (-1.5,-1.0) rectangle (1.5,1);
\draw[fill=white] (0,0) circle (.05cm);
\draw[dashed, thick, -] (-2,-1.0)--(2,-1.0);
\node at (.3,0){$i$};
\end{scope}
\node at (-2.4,1){$\HH\setminus\{ i\}$};
\end{tikzpicture}    

\caption{$\PP^1_{\R}$}
\end{subfigure}
\begin{subfigure}[b]{0.3\textwidth}
\centering
\begin{tikzpicture}
\begin{scope}
\clip (-2,-2) rectangle (2,2);
\draw[fill=lightgray] (0,0) circle (1cm);
\draw[thick, ->]     (0:1cm) arc (0:  90:1cm);
\draw[thick, ->]     (0:1cm) arc (0:  270:1cm);
\draw[thick, -]  (0:1cm) arc (0: -90:1cm);
\filldraw (1,0) circle (1.0pt);
\filldraw (-1,0) circle (1.0pt);
\node at (1.3,0){$1$};
\node at (-1.4,0){$-1$};
\node at (-1.2,1.2){$\PP^2_{\R}$};
\end{scope}
\end{tikzpicture}    

\caption{$X_{\R}: x_0^2+x_1^2+x_2^2=0$}
\end{subfigure}
\caption{Group generators for two real forms of $\PP^1_{\C}$}
\end{figure}

\subsection{Localizations of $\R [x]$}

\begin{example}\label{example3} {\rm $A=S^{-1}\R [x]$, where $S=\{ (x-a_1)^{e_1}\cdots (x-a_n)^{e_n}\, |\, e_j\in\N\}$ for $a_1,...,a_n\in\R$. 
Recall from {\it Example\,\ref{example2}} that $\Pi (\R[x])\cong\mathcal{F}(\HH )$. Since $\hat{S}=S$ and $[S]=\{ 1\}$, {\it Prop.\,\ref{localize}} implies:
\[
\Pi (A)=\Pi (S^{-1}\R [x]) = \Pi ( \R [x])\cong \mathcal{F}(\HH )
\]
For example, if $S=\{ (x-1)^{e_1}(x+1)^{e_2}\, |\, e_1,e_2\in\N\}$, then ${\rm Spec}(A)=\R^{**}$, the twice-punctured real line. See See {\it Fig.\,2(A)}.}
\end{example}

\begin{example} {\rm $A=\R [x,(x^2+1)^{-1}]$. Then $B=\C [x,(x-i)^{-1},(x+i)^{-1}]$, which is the coordinate ring of $\C^{**}$, the twice-punctured complex line. 
If $S=\{ (x^2+1)^m\,\vert\, m\in\N\}$, then $A=S^{-1}\R [x]$ and:
\[
\hat{S}=\{ (x-i)^m, (x+i)^n\,\vert\, m,n\in\N\}
\]
By {\it Prop.\,\ref{localize}}, it follows that 
$\Pi (A)=\mathcal{F}(\HH\setminus\{ i\})$. See {\it Fig.\,2(B)}.}
\end{example}

\begin{figure}[b]
\begin{subfigure}[b]{0.4\textwidth}
\centering
\begin{tikzpicture}
\begin{scope}
\clip (-2,-2) rectangle (2,2);
\draw[fill=lightgray,lightgray] (-1.5,-1.5) rectangle (1.5,.5);
\draw[fill=black] (0,-.5) circle (.05cm);
\draw[dashed, thick, -] (-2,-1.5)--(2,-1.5);
\node at (.3,-.5){$i$};
\end{scope}
\node at (-2,.5){$\HH$};
\end{tikzpicture}    

\caption{$(x^2-1)y=1$}
\end{subfigure}
\begin{subfigure}[b]{0.4\textwidth}
\centering
\begin{tikzpicture}
\begin{scope}
\clip (-2,-2) rectangle (2,2);
\draw[fill=lightgray,lightgray] (-1.5,-1.5) rectangle (1.5,.5);
\draw[fill=white] (0,-.5) circle (.05cm);
\draw[dashed, thick, -] (-2,-1.5)--(2,-1.5);
\node at (.3,-.5){$i$};
\end{scope}
\node at (-2.4,.5){$\HH\setminus\{ i\}$};
\end{tikzpicture}    

\caption{$(x^2+1)y=1$}
\end{subfigure}
\caption{Group generators for two real forms of $\C^{**}$}
\end{figure}

\begin{example} {\rm $A=\R [x]_{\mathfrak{p}}$, where $\mathfrak{p}=(x^2+1)\R [x]$. Then $B=\hat{S}^{-1}\C[x]$, where $S$ is the set of products of primes of $B$ not associate with $x+i$ or $x-i$. 
Since $[x+i]^{-1}=[x-i]$, it follows that, given $f\in B'$, $[f]=[x-i]^e$ for some $e\in\Z$. Therefore, $\Pi (A)\cong\Z$, generated by $[x-i]$. }
\end{example}

\subsection{Three Real Forms of $\C^*$}

\begin{example} {\rm $A=\R [x,x^{-1}]$. By {\it Example\,\ref{example3}} we have
\[
\Pi (A)=\Pi (S^{-1}\R [x]) = \Pi ( \R [x])=\mathcal{F}(\HH )
\]
where $S=\{ x^n\, |\, n\in\N\}$. See {\it Fig.\,2(A)}}.
\end{example}

\begin{example}{\rm $A=\R [x,y]$ where $x^2+y^2+1=0$. By {\it Prop.\,\ref{UFD}} below, $A$ is a UFD. Therefore, by {\it Cor.\,\ref{A=UFD2}}, $\Pi (A)$ is a free abelian group. 
We have $L=\C (x+iy)$ and $K=\R (x,y)$. There is a natural surjection $\sigma :\Pi (X_{\R})\to \Pi (A)$, where $\sigma [t-z]=[x+iy-z]$ for $z\in\C$; see {\it Example\,\ref{projective-line}}. 
The kernel of $\sigma$ is $\langle t\rangle$. Therefore, $\Pi (A)$ is generated by the punctured real projective plane, which is homeomorphic to an open M\"obius band: $\Pi (A)\cong\mathcal{F}(\PP^2_{\R}\setminus\{ 0\})$.
See {\it Fig.\,3(B)}. 
}
\end{example}

\begin{example}\label{S1} {\rm $A=\R [x,y]$ where $x^2+y^2-1=0$. Write $B=\C[t,t^{-1}]$, where $t=x+iy$, noting that $t^{-1}=\bar{t}$. We have:
\[
\Delta (B)_1=\{ ct^n(t-z)\, |\, c,z\in\C^* , n\in\Z\}
\]
Given $z\in\C^*$, let $f=t-z$, and suppose that $g\in\mathcal{O}_f$ (the orbit of $f$), where $g=ct^n(t-w)$ for $c,w\in\C^*$ and $n\in\Z$. 
There exists $(b,\lambda )\in\Gamma$ with $g=(b,\lambda )\cdot f$. Write $b=\beta t^m$ for $\beta\in\C^*$ and $m\in\Z$. If $\lambda =1$, then $g=bf$ and we have:
\[
ct^n(t-w)=\beta t^m(t-z) \implies w=z 
\]
If $\lambda\ne 1$, then $g=b\bar{f}$, and we have:
\begin{equation}\label{case2}
ct^n(t-w)=\beta t^m(\bar{t}-\bar{z})=\beta t^m(t^{-1}-\bar{z})=-\beta t^{m-1}\bar{z}(t-\bar{z}^{-1}) \implies w=\bar{z}^{-1}
\end{equation}
Since $\bar{z}^{-1}=z$ if and only if $\| z\| =1$, it follows that the set $\mathcal{T}:=\{ t-z \, |\, z\in\D , z\ne 0\}$ is a transversal for the $\Gamma$-action.
However, $[\mathcal{T}]$ is not a minimal generating set, as will be shown. 

Let $V=\partial\D$ and $U=\{ z\in\C \, |\, 0<\| z\| <1\}$. Define subgroups $G,H\subset\Pi (A)$ by:
\[
G=\langle t-z \, |\, z\in V\rangle \quad {\rm and} \quad H=\langle t-z \, |\, z\in U\rangle
\]
Since $\D=U\cup V$, we see that $\Pi (A)=GH$.

Line (\ref{case2}) shows $[t-z]^{-1}=[t-\bar{z}^{-1}]$, and this implies 
$[t-\zeta]^2=1$ for $\zeta\in V$. Therefore, $\gamma^2=1$ for every $\gamma\in G$. 
In addition, note that $t^2+1=2xt$ and $t^2-1=2iyt$. 
Therefore, given $\zeta\in V$:
\[
(t-\zeta )(t-\bar{\zeta})=t^2-(\zeta +\bar{\zeta})t+1= 2(x-{\rm Re}(\zeta))t \in M \implies [t-\bar{\zeta}]=[t-\zeta]^{-1}=[t-\zeta]
\]
Similarly:
\[
(t+\zeta )(t-\bar{\zeta})=t^2+(\zeta -\bar{\zeta})t-1= 2i(y+{\rm Im}(\zeta ))t \in M \implies [t-\bar{\zeta}]=[t+\zeta ]^{-1}=[t+\zeta]
\]
Therefore, if $E$ is the arc $E=\{ t-e^{i\theta}\, |\, 0\le\theta <\pi/2\}$, then $G=\langle E\rangle$. 

On the other hand, $H\cong\mathcal{F}(U)$. In order to show this, assume that there is a relation among elements of $[U]$. 
Then there exists a Laurent polynomial $p(t)\in A$ such that $p(t)$ is not a unit and all the nonzero roots of $p(t)$ are in $U$. Let $z\in U$ be such that $p(t)\in (t-z)B$. As in line (1), we have:
\[
p(t)=\overline{p(t)}\in (\bar{t}-\bar{z})B=(t-\bar{z}^{-1})B
\]
Therefore, both $z$ and $\bar{z}^{-1}$ are in $U$, which is a contradiction. Consequently, no such $p(t)$ exists. 

Since $G$ is torsion and $H$ is free abelian, it follows that $G\cap H=\{ 1\}$, and that $\Pi (A)=G\times H$. 
$\Pi (A)$ is generated by the punctured disc $\D\setminus\{ 0\}$ with boundary identifications as indicated in {\it Fig.\,3(C)}. }
\end{example}

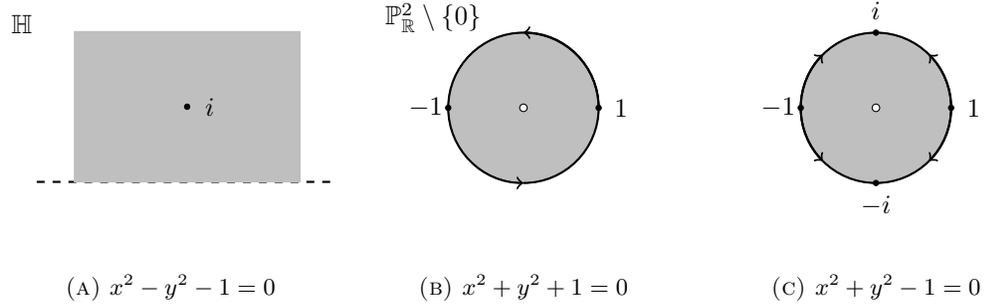
\begin{figure}[t]

\begin{subfigure}[b]{0.3\textwidth}
\centering
\begin{tikzpicture}
\draw[dashed, thick, -] (-2,1)--(2,1);
\draw[fill=lightgray,lightgray] (-1.5,1) rectangle (1.5,3);
\node at (-2.2,3.1){$\HH$};
\node at (0,.1){};
\filldraw (0,2) circle (1.0pt);
\node at (.3,2){$i$};
\end{tikzpicture}    

\caption{$x^2-y^2-1=0$}
\end{subfigure}
\begin{subfigure}[b]{0.3\textwidth}
\centering
\begin{tikzpicture}
\begin{scope}
\clip (-2,-2) rectangle (2,2);
\draw[fill=lightgray] (0,0) circle (1cm);
\draw[fill=white] (0,0) circle (.05cm);
\draw[thick, ->]     (0:1cm) arc (0:  90:1cm);
\draw[thick, ->]     (0:1cm) arc (0:  270:1cm);
\draw[thick, -]  (0:1cm) arc (0: -90:1cm);
\filldraw (1,0) circle (1.0pt);
\filldraw (-1,0) circle (1.0pt);
\node at (1.3,0){$1$};
\node at (-1.3,0){$-1$};
\node at (-1.2,1.2){$\PP^2_{\R}\setminus\{ 0\}$};
\end{scope}
\end{tikzpicture}    

\caption{$x^2+y^2+1=0$}
\end{subfigure}
\begin{subfigure}[b]{0.3\textwidth}
\centering
\begin{tikzpicture}
\begin{scope}
\clip (-2,-2) rectangle (2,2);
\draw[fill=lightgray] (0,0) circle (1cm);
\draw[fill=white] (0,0) circle (.05cm);
\draw[thick, ->]     (0:1cm) arc (0:  45:1cm);
\draw[thick, ->]     (0:1cm) arc (0:  -225:1cm);
\draw[thick, ->]     (0:1cm) arc (0:  225:1cm);
\draw[thick, ->]     (0:1cm) arc (0: -45:1cm);
\filldraw (1,0) circle (1.0pt);
\filldraw (-1,0) circle (1.0pt);
\filldraw (0,1) circle (1.0pt);
\filldraw (0,-1) circle (1.0pt);
\node at (0,1.3){$i$};
\node at (0,-1.3){$-i$};
\node at (1.3,0){$1$};
\node at (-1.3,0){$-1$};
%\node at (-1.2,1.2){$\mathcal{C}$};
\end{scope}
\end{tikzpicture}    
\caption{$x^2+y^2-1=0$}
\end{subfigure}
\caption{Group generators for three real forms of $\C^*$}
\end{figure}

\subsection{Polynomial Curves} Let $C\subset \R^n$ be a polynomial curve, that is:
\[
C={\rm Spec}(R) \quad {\rm for}\quad R=\R [p_1(x),...,p_n(x)]\subset A=\R [x]
\]
Note that $C$ is a rational curve by L\"uroth's Theorem.
If $T=\C\otimes_{\R}R$, then:
\[ 
T=\C[p_1(x),...,p_n(x)]\subset B=\C [x] \quad {\rm and}\quad M\cap T=(A'\C^*)\cap T=R'\C^*
\]
Therefore, $\langle M\cap T\rangle_R=\{ 1\}$. By {\it Prop.\,\ref{subalgebra}}, 
$\Pi (C)$ is a subgroup of $\Pi (\R [x])\cong\mathcal{F}(\HH )$. By Dedekind's theorem, $\Pi (C)$ is a free abelian group. 

For a given polynomial curve $C$, we seek a description of a basis of $\Pi (C)$ in terms of the basis $\HH$. 
For example, consider the cuspidal cubic curve $C$ defined by:
\[
R=\R [x^2,x^3]=\{ p(x)\in\R[x]\,\vert\, p^{\prime}(0)=0\}
\]
Then 
\[
T=\C [x^2,x^3]=\{ p(x)\in\C[x]\,\vert\, p^{\prime}(0)=0\} 
\]
and $\Delta(T)$ is the set of $f\in T$ with no conjugate roots, i.e., $f(z)=0$ implies $f(\bar{z})\ne 0$. Given monic $f\in\Delta (T)$, if 
\[
f=(x-w_1)\cdots (x-w_n) \,\, , \,\, w_j\in\C\setminus\R
\]
then the condition $f'(0)=0$ is equivalent to 
the condition $w_1^{-1}+\cdots +w_n^{-1}=0$. Assume that $w_1,...,w_s\in\HH$ and $w_{s+1}\cdots w_n\not\in\HH$. Since $[x-z]=[x-\bar{z}]^{-1}$ for any $z\in\C$, it follows that the subgroup $\Pi (C)$ of $\Pi (\R [x])$ is generated by elements of the form
\[
[x-z_1]\cdots [x-z_s][x-z_{s+1}]^{-1}\cdots [x-z_n]^{-1} \,\, ,\,\, z_1,...,z_n\in\HH
\]
where $z_1^{-1}+\cdots +z_s^{-1}+\bar{z}_{s+1}^{-1}+\cdots \bar{z}_n^{-1}=0$ and $\{ z_1,...,z_s\}\cap \{z_{s+1},...,z_n\}=\emptyset$. 
However, many such products correspond to reducible elements of $\Delta (T)$. 
For example, if $f\in\Delta (T)$ and $\deg f=4$, then $f\in\Delta (T)_1$ if and only if $w_1+w_1+w_3+w_4\ne 0$. 
Finding a basis for $\Pi (C)$ of this form thus requires a good description of irreducibles in $T$, which (to the author's knowledge) is lacking.

%%%%%%%%%%%%%%%%%%%%%%%%%%%%%%%%%%%%%%%%%%%%%%%%%%%%%%%%%%%%%%%%%%%%%%%%%%%%%%

\section{Appendix}

\subsection{Two UFDs}

The purpose of this section is to show the following.

\begin{proposition}\label{UFD} The rings $\R [x,y]/(x^2+y^2+1)$ and $\R [x,y,z]/(x^2+y^2+z^2)$ are UFDs. 
\end{proposition}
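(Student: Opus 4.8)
The plan is to prove both rings are UFDs by the same strategy: each is the coordinate ring of an affine quadric, and I will exhibit it as a ring of invariants or as a well-understood subring of a known UFD, then apply Nagata's criterion. Recall Nagata's lemma: if $D$ is a noetherian domain, $x\in D$ a prime element, and $D[x^{-1}]$ is a UFD, then $D$ is a UFD. So for each ring it suffices to (i) identify a prime element, (ii) show the localization at that element is a UFD, and (iii) check that the ring is noetherian and normal (the latter being needed only to ensure primes behave well; noetherianity is automatic since both are affine).

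First I would handle $B_1 = \R[x,y]/(x^2+y^2+1)$. Over $\C$ this becomes $\C[x,y]/(x^2+y^2+1) \cong \C[t,t^{-1}]$ via $t = x+iy$ (as in {\it Example\,\ref{Laurent}}), which is a UFD with unit group $\C^*\times\langle t\rangle$. Set $u = x+1 \in B_1$ — better, I would look for an element whose vanishing locus is a single line on the quadric. Since $x^2 = -(y^2+1) = -(y+i)(y-i)$ only after base change, over $\R$ the element $x$ itself generates a prime: $B_1/(x) \cong \R[y]/(y^2+1) \cong \C$, a field, so $(x)$ is prime. Now $B_1[x^{-1}]$: here $y^2 = -(1+x^2)$, and $1+x^2$ is a unit, so $B_1[x^{-1}] = \R[x,x^{-1}][y]/(y^2 + 1 + x^2)$, which is a free quadratic extension of the PID $\R[x,x^{-1}]$; one checks $1+x^2$ is not a square times a unit and the extension is a UFD (it is $\R[x,x^{-1}]$ adjoining a square root of a squarefree element, hence integrally closed, and its class group injects into that of $\R[x,x^{-1}]$, which is trivial — or more directly, its complexification is $\C[t,t^{-1}][\text{...}]$ which splits into Laurent polynomial rings). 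Then Nagata gives $B_1$ a UFD. For normality: $B_1$ is a smooth affine curve over $\R$ (the quadric $x^2+y^2+1=0$ has no real singular points and none over $\C$ either), hence regular, hence normal.

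Next, $B_2 = \R[x,y,z]/(x^2+y^2+z^2)$. This is the affine cone, but over $\R$ the only real point is the origin, and the scheme is normal: the singular locus is the origin, which has codimension $2$ in this surface, and the ring is a hypersurface (hence Cohen–Macaulay, satisfies $S_2$) that is regular in codimension $1$ — so by Serre's criterion $B_2$ is normal. Now I need a prime element. Try $z$: $B_2/(z) \cong \R[x,y]/(x^2+y^2)$, which is \emph{not} a domain over... wait, $x^2+y^2$ is irreducible over $\R$, so $\R[x,y]/(x^2+y^2)$ is a domain, hence $(z)$ is prime in $B_2$. Then $B_2[z^{-1}] = \R[x,y,z,z^{-1}]/(x^2+y^2+z^2)$; setting $p = x/z$, $q = y/z$ we get $p^2+q^2+1 = 0$, so $B_2[z^{-1}] \cong (\R[x,y]/(x^2+y^2+1))[z,z^{-1}] \cong B_1[z,z^{-1}]$, which is a UFD since $B_1$ is (a Laurent polynomial extension of a UFD is a UFD). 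Nagata's criterion then yields $B_2$ a UFD.

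\textbf{Main obstacle.} The delicate point is verifying that the localizations $B_1[x^{-1}]$ and $B_2[z^{-1}]$ are genuinely UFDs and not merely normal — the quadratic-extension argument for $B_1[x^{-1}]$ needs care because $\R[x,x^{-1}]$ has nontrivial units, and one must rule out $-(1+x^2)$ becoming a square after adjoining units (it does not, by a degree/parity argument on the Laurent exponent, or because its complexification $\C[t,t^{-1}]$-version factors cleanly). Once the localizations are handled, Nagata's criterion and the normality checks are routine. An alternative for $B_1$, avoiding Nagata entirely, is to observe $B_1 \cong \R[t,t^{-1}]^{\sigma'}$ for a suitable conjugation and invoke the boxed fact from {\it Section\,\ref{UFD}} — but that fact requires $B^* = \C^*$, which fails for $\C[t,t^{-1}]$, so Nagata really does seem to be the cleanest route.
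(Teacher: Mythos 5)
Your reduction of the second ring to the first is correct and clean: $z$ is prime in $\R[x,y,z]/(x^2+y^2+z^2)$ since the quotient is the domain $\R[x,y]/(x^2+y^2)$, and inverting $z$ gives $\bigl(\R[p,q]/(p^2+q^2+1)\bigr)[z,z^{-1}]$, so Nagata's criterion transfers factoriality from the plane conic to the cone. But the proof of the base case $\R[x,y]/(x^2+y^2+1)$ has a genuine gap: you have not actually shown that $B_1[x^{-1}]=\R[x,x^{-1}][y]/(y^2+1+x^2)$ is a UFD. Both justifications you offer are false as general principles, and each is refuted by an example inside this very paper. ``A normal quadratic extension of a PID by a square root of a squarefree element has class group injecting into that of the base'' fails for $\R[x][y]/(y^2-(1-x^2))=\R[\Sp^1]$: the base is a PID, $1-x^2$ is squarefree, the ring is normal, yet it is not factorial ({\it Cor.\,\ref{circle}}), its class group being $\Z/2\Z$ (over $\C$ one can also take $y^2=x^3-x$, an affine elliptic curve, whose class group is enormous). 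Likewise ``the complexification is a product of Laurent-type localizations, hence a UFD, hence so is the real form'' is exactly the implication the paper is at pains to show can fail: $\Sp^1$ has complexification $\C[t,t^{-1}]$, a UFD, and the boxed transfer statement in {\it Section\,3} requires $B^*=\C^*$, which fails here just as you note it fails for $B_1$ itself. So your Nagata step for $B_1$ only relocates the difficulty into the localization, where it is no easier: proving $B_1[x^{-1}]$ factorial amounts to the same class-group computation for a pointless real conic as proving it for $B_1$.

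What is missing is a genuine input guaranteeing triviality of the class group of a real conic with no rational points. The paper supplies this via Samuel's criterion ({\it Thm.\,\ref{Samuel}}): an affine irreducible conic over a field $k$ whose projective closure has no $k$-rational point has factorial coordinate ring. Applied over $\R$ (the projective conic $X^2+Y^2+T^2=0$ has no real points, by formal reality) this settles $B_1$ outright, with no localization needed. For the cone, the paper applies Samuel over the formally real field $\R(z)$ and then descends by Nagata using all primes of $\R[z]$; your alternative descent using the single prime $z$ and the isomorphism $B_2[z^{-1}]\cong B_1[z,z^{-1}]$ is a perfectly good, arguably simpler, substitute for that second step --- but only once the conic case is established by something like Samuel's theorem or a direct computation that ${\rm Pic}$ of the pointless projective conic is generated by the degree-two class of the divisor at infinity.
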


\begin{proof} Let $F$ be a formally real field. Given nonzero $a\in F$, 
let $C'$ be the affine plane curve (over $F$) defined by $X^2+Y^2+a^2=0$. Then $C'$ is irreducible over $F$. 

Let $C$ be the projective extension of $C'$. Then $C$ is given by the equation
\[
X^2+Y^2+a^2T^2=X^2+Y^2+(aT)^2=0
\]
in projective coordinates $X,Y,T$. Since $a\ne 0$ and $F$ is formally real, $C$ has no $F$-rational points. By Samuel's Criterion (cited below), it follows that the ring
$F[X,Y]/(X^2+Y^2+a^2)$ is a UFD. This is true, in particular, if $F=\R$ and $a=1$.  

Likewise, if $F=\R (z)=\R ^{(1)}$ and $a=z$, then $F$ is a formally real field (see \cite{Freudenburg.17}, Lemma 10.10)
and the ring $\mathcal{B}=\R (z) [x,y]/(x^2+y^2+z^2)$ is a UFD. Define the subring 
\[
B=\R [z][x,y]/(x^2+y^2+z^2)
\]
and define the multiplicative subset $S\subset B$ by $S=\R [z]\setminus\{ 0\}$. 
If $P\subset S$ denotes the set of linear and irreducible quadratic polynomials in $\R [z]=\R^{[1]}$, then $S$ is generated by $P$. It is easy to check that, for each $p\in P$, the ring
$B/pB$ is a domain. Therefore, elements of $P$ are prime in $B$. Since $\mathcal{B}=S^{-1}B$, it follows by Nagata's Criterion (cited below) that $B$ is a UFD.
\end{proof}

\begin{theorem}\label{Samuel} {\rm (Samuel \cite{Samuel.61}, Thm. 5.1 (a))}  Let $C'$ be an irreducible conic in the affine plane, defined over a field $k$, 
let $A$ be its affine coordinate ring over $k$, and let $C$ be the projective extension of $C'$. If $C$ has no $k$-rational point, then $A$ is a UFD. 
\end{theorem}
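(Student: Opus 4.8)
The plan is to reduce the statement to the triviality of the ideal class group of $A$. Since $C'$ is an irreducible conic, $A$ is a one-dimensional Noetherian domain, and since $C$ has no $k$-rational point, $C$ (hence the open subcurve $C'$) is smooth: if $C$ were singular its singular locus, being a reduced Galois-stable point, would be $k$-rational, and if $C$ were geometrically non-reduced the line it doubles would be defined over $k$ and carry $k$-rational points. Thus $A$ is regular, hence a Dedekind domain, and for such a ring being a UFD is equivalent to being a PID, i.e.\ to ${\rm Cl}(A)=0$. So it suffices to prove ${\rm Cl}(A)=0$.

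Next I would compute ${\rm Cl}(C)$ for the smooth plane conic $C\subset\PP^2_k$. Over $\bar{k}$ we have $C_{\bar{k}}\cong\PP^1_{\bar{k}}$, so $C$ is a geometrically connected curve of genus $0$, and there is a degree homomorphism $\deg\colon{\rm Cl}(C)\to\Z$. I claim $\deg$ is injective with image $2\Z$. By Riemann--Roch on the genus-$0$ curve $C$, $\ell(\delta)=\deg\delta+1$ whenever $\deg\delta\geq 0$; hence a class of degree $0$ is effective, so zero (this gives injectivity), and a class of degree $1$ is effective, hence the class of a $k$-rational point of $C$. Since $C$ has no $k$-rational point, the image of $\deg$ — a subgroup of $\Z$ containing $2$ (the degree of the hyperplane class $[\mathcal{O}_C(1)]$) — cannot be all of $\Z$, so it equals $2\Z$. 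Therefore ${\rm Cl}(C)\cong\Z$, generated by $[\mathcal{O}_C(1)]$, equivalently by the class $[P]$ of the point at infinity $P=C\setminus C'$. (Here $P$ is a single reduced closed point of degree $2$: the line at infinity meets the smooth conic $C$ in two geometric points that can neither coincide — a tangency point would be $k$-rational — nor both be $k$-rational.)

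Finally, I would invoke the excision exact sequence for class groups (\cite{Hartshorne.77}, Prop.\ II.6.5) applied to $C'=C\setminus\{P\}$:
\[
\Z\cdot[P]\longrightarrow{\rm Cl}(C)\longrightarrow{\rm Cl}(C')\longrightarrow 0 ,
\]
the first arrow sending the generator to $[P]$. As $[P]$ generates ${\rm Cl}(C)$, the cokernel is trivial, so ${\rm Cl}(A)={\rm Cl}(C')=0$; thus $A$ is a PID, in particular a UFD.

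I expect the Picard computation for $C$ — specifically, the exclusion of odd-degree divisor classes — to be the crux of the argument: this is precisely the step that uses the hypothesis that $C$ has no $k$-rational point, via Riemann--Roch on the genus-$0$ curve $C$. Everything else (regularity of $A$, the structure of the point $P$, the excision sequence, and the translation UFD $\Leftrightarrow$ PID for Dedekind domains) is routine. One could alternatively obtain ${\rm Cl}(C)=2\Z$ from the Galois-descent exact sequence $0\to{\rm Cl}(C)\to{\rm Cl}(C_{\bar{k}})^{\rm Gal}\to{\rm Br}(k)$, but the elementary degree-map argument above keeps the Brauer group out of it.
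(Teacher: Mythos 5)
The paper offers no proof of this statement --- it is quoted from Samuel \cite{Samuel.61} --- so there is no internal argument to compare with; your proof must stand on its own. Its main line (reduce to ${\rm Cl}(A)=0$; compute ${\rm Cl}(C)$ via the degree map, using Riemann--Roch on a genus-$0$ curve to show degree-$0$ classes vanish and degree-$1$ classes would force a rational point; then kill the generator by excising the degree-$2$ point at infinity) is sound, and it is complete whenever ${\rm char}\,k\neq 2$ or $k$ is perfect --- in particular for every use the paper makes of the theorem, since it is only invoked over formally real fields.

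The genuine gap is the smoothness step over imperfect fields of characteristic $2$. Take $k=\F_2(s,t)$ and the affine conic $x^2+sy^2+t=0$, with projective extension $C\colon X^2+sY^2+tZ^2=0$. This is irreducible over $k$ and has no $k$-rational point (a $k$-point would give a $k$-linear dependence among $1,\sqrt{s},\sqrt{t}$, since $X^2+sY^2+tZ^2=(X+\sqrt{s}\,Y+\sqrt{t}\,Z)^2$), yet geometrically $C$ is a double line: it is not smooth, the doubled line $X+\sqrt{s}\,Y+\sqrt{t}\,Z=0$ is \emph{not} defined over $k$, and it carries no $k$-rational point of $\PP^2_k$ --- so both prongs of your dichotomy for excluding the non-reduced case fail. (Your other degenerate case, two conjugate lines, is correctly excluded in all characteristics: the singular point is the radical of the associated bilinear form, hence $k$-rational.) The theorem is still true in this residual case, but your argument needs repair: one must show that $C$ is nevertheless \emph{regular} --- e.g.\ prove $A$ normal directly, which again uses exactly the $k$-linear independence of $1,\sqrt{s},\sqrt{t}$, i.e.\ the no-rational-point hypothesis --- note that $C$ is integral with $H^0(\mathcal{O}_C)=k$ and $p_a(C)=0$ by the standard sequence for a degree-$2$ curve in $\PP^2_k$, and then run your degree/Riemann--Roch/excision argument for regular integral curves (smoothness is never really needed). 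As written, the proof should either include that regularity argument or restrict to ${\rm char}\,k\neq 2$ (which suffices for this paper).
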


\begin{theorem}\label{Nagata} {\rm (Nagata \cite{Nagata.57})}  Let $A$ be an integral domain and $S\subset A$ the multiplicative system generated by any family of prime elements of $A$.  If $S^{-1}A$ is a UFD, then $A$ is a UFD. 
\end{theorem}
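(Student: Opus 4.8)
The plan is to prove Nagata's criterion using Kaplansky's characterization of unique factorization domains: an integral domain is a UFD if and only if every nonzero prime ideal contains a prime element. This characterization holds with no noetherian hypothesis, so the entire burden reduces to producing a prime element of $A$ inside an arbitrary nonzero prime ideal $\mathfrak{q}\subset A$. Let $\{p_\lambda\}$ be the family of prime elements generating $S$, so that every element of $S$ is, up to a unit, a finite product of the $p_\lambda$.

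First I would dispose of the easy case $\mathfrak{q}\cap S\ne\emptyset$. Here $\mathfrak{q}$ contains some element of $S$, and since $\mathfrak{q}$ is prime and that element is a finite product of the $p_\lambda$, one of the generating primes $p_\lambda$ already lies in $\mathfrak{q}$; this $p_\lambda$ is the desired prime element.

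The substantive case is $\mathfrak{q}\cap S=\emptyset$. Then $S^{-1}\mathfrak{q}$ is a nonzero proper prime ideal of the UFD $S^{-1}A$, so it contains a prime element of $S^{-1}A$; clearing denominators, I can represent this prime by some $q\in\mathfrak{q}$ whose image $q/1$ is prime in $S^{-1}A$. Because no $p_\lambda$ lies in $\mathfrak{q}$ in this case, I may strip from $q$ every factor coming from $S$: dividing out the $p_\lambda$ that divide $q$, I obtain $q'\in\mathfrak{q}$ that is divisible by no $p_\lambda$ while $q'/1$ remains prime in $S^{-1}A$ (the $p_\lambda$ being units there). The remaining step is the crux, namely showing that such a $q'$ is already prime in $A$. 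Given $q'\mid ab$ in $A$, I pass to $S^{-1}A$, use primeness of $q'/1$ there to obtain (say) $q'/1\mid a/1$, and rewrite this as $sa=q'x$ in $A$ with $s\in S$ and $x\in A$; since $s$ is a product of the $p_\lambda$ and $q'$ is divisible by none of them, I cancel the prime factors of $s$ one at a time (each $p_\lambda\mid s$ divides $q'x$ but not $q'$, hence divides $x$) to conclude $q'\mid a$ in $A$. Thus $q'$ is a prime element of $A$ lying in $\mathfrak{q}$, and Kaplansky's criterion yields that $A$ is a UFD.

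I expect the main obstacle to be the stripping step, namely guaranteeing that dividing the generating primes out of $q$ terminates; this is precisely where a chain condition on principal ideals is needed, and it is automatic in the intended applications, where $A$ is noetherian. By contrast, the concluding cancellation argument — that an element divisible by no $p_\lambda$ and prime after localization is already prime in $A$ — uses only that each $s\in S$ is a finite product of the $p_\lambda$, and requires no finiteness assumption on $A$ itself.
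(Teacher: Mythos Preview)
The paper does not supply its own proof of this result; it is quoted as Nagata's theorem with a citation to \cite{Nagata.57} and then applied in the proof of {\it Prop.\,\ref{UFD}}. So there is nothing to compare your argument against within the paper itself.

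Your argument via Kaplansky's criterion is the standard modern route and is correctly organized. The cancellation step at the end is fine: each $s\in S$ is by definition a \emph{finite} product of the $p_\lambda$, so peeling prime factors off $s$ terminates, and the conclusion $q'\mid a$ in $A$ follows. You have also correctly isolated the one genuine gap, and it is worth being explicit that it is a gap in the \emph{statement} as quoted, not merely in your proof: without some chain condition the stripping step can fail, and in fact the theorem is false as stated. For a concrete counterexample, take
\[
A=k\bigl[x,\ y/x^{n}\ :\ n\ge 0\bigr]\subset k(x,y).
\]
Here $A/xA\cong k$, so $x$ is prime; with $S=\{x^{n}\}$ one has $S^{-1}A=k[x,x^{-1},y]$, a UFD; yet $y=x^{n}(y/x^{n})$ for every $n$, so $y$ admits no finite irreducible factorization and $A$ is not a UFD. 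Thus some hypothesis such as noetherian, ACCP, or atomicity is genuinely required. In the paper's application (to $\R[z][x,y]/(x^{2}+y^{2}+z^{2})$, which is noetherian) this is harmless, exactly as you note, and with that hypothesis in place your proof is complete.
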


%%%%%%%%%%%%%%%%%%%%%%%%%%%%%%%%%%%%%%%%%%%%%%%%%%%%%%%%%%%%%%%%%%%%%%%%%%%%%%%%

\subsection{Remarks and Questions}

\begin{enumerate}

\item The elliptic curve with $j$-invariant 1728 has two real forms $C_1$ and $C_2$, given by $y^2=x(x^2-1)$ and $y^2=x(x^2+1)$. What are 
$\Pi (C_1)$ and $\Pi (C_2)$?
\medskip

\item Since $\Sp^2$ is factorial, {\it Thm.\,\ref{A=UFD3}} shows that $\Pi (\Sp^2)$ is torsion free. But since its complexification is not factorial, we do not know if $\Pi (\Sp^2)$ is a free abelian group. To explore this problem, one can consider whether certain subgroups are free abelian, for example, the subgroup generated by $[c_1X+c_2Y+c_3Z+c_4]$, where $X^2+Y^2+Z^2=1$ and $c_j\in\C$. 
The inherent difficulty is seen in the group relation:
\[
[X+Y+iZ+1][X-Y+iZ+1]=[2X+2][X+iZ]=[X+iZ]
\]
\item Since sets of the same cardinality generate free abelian groups which are isomorphic as groups, it is natural to look for a stronger notion of isomorphism for polar groups, one which encodes some of the underlying topology and geometry. For example, in the case of the polynomial ring
$\C [x]$, the set of prime divisors $x-z$, $z\in\C$, forms an algebraic variety $V$ which is isomorphic to $\C$. In this case, $\mathcal{F}(V)$ can be given the structure of a topological group by declaring that, if $\mathcal{U}$ is a system of basic open sets on $V$, then the subgroups $\{ \mathcal{F}(U) : U\in\mathcal{U}\}$ form a system of basic open sets on $\mathcal{F}(V)$. 
The polar group $\Pi (A)$ is a free quotient of $\mathcal{F}(V)$ and takes the quotient topology.
This idea can be extended to case where $B$ is a UFD and the set of prime elements of $B$ has the structure of an infinite-dimensional $\C$-variety (ind-variety). 
Clearly, these ideas require further development. 

\end{enumerate}

\bibliographystyle{amsplain}

\begin{thebibliography}{1}

\bibitem{Benzerga.16}
M.~Benzerga, \emph{Real structures on rational surfaces and automorphisms
  acting trivially on {P}icard groups}, Math. Z. \textbf{282} (2016),
  1127--1136.

\bibitem{Freudenburg.ppt18b}
G.~Freudenburg, \emph{Polynomial embeddings of the 1-sphere in the plane}, in
  preparation.

\bibitem{Freudenburg.17}
\bysame, \emph{Algebraic {T}heory of {L}ocally {N}ilpotent {D}erivations},
  second ed., Encyclopaedia of Mathematical Sciences, vol. 136,
  Springer-Verlag, Berlin, Heidelberg, New York, 2017.

\bibitem{Hartshorne.77}
R.~Hartshorne, \emph{Algebraic {G}eometry}, {GTM}, vol.~52, Springer-{V}erlag,
  1977.

\bibitem{Nagata.57}
M.~Nagata, \emph{A remark on the unique factorization theorem}, J. Math. Soc.
  Japan \textbf{9} (1957), 143--145.

\bibitem{Samuel.61}
P.~Samuel, \emph{On unique factorization domains}, Illinois J. Math. \textbf{5}
  (1961), 1--17.

\end{thebibliography}

\noindent \address{Department of Mathematics\\
Western Michigan University\\
Kalamazoo, Michigan 49008}\\
\email{gene.freudenburg@wmich.edu}
\bigskip

\end{document}